\newtheorem{theorem}{Theorem}[section]
\newtheorem{corollary}{Corollary}[section]
\newtheorem{lemma}{Lemma}[section]
\newtheorem{definition}{Definition}[section]
\newtheorem{remark}{Remark}[section]
\newtheorem{proposition}{Proposition}[section]
\newtheorem{example}{Example}[section]
\newcommand{\R}{{\mathbb R}}
\newcommand{\lm}{\lambda}
\newcommand{\N}{{\mathbb N}}
\newcommand{\di}{\mbox{div}}
\newcommand{\al}{\alpha}
\newcommand{\be}{\beta}
\newcommand{\ta}{\theta}
\newcommand{\ga}{\gamma}
\newcommand{\na}{\nabla}
\newcommand{\Ga}{\Gamma}
\newcommand{\ep}{\epsilon}
\newcommand{\Om}{\Omega}
\newcommand{\pom}{\partial\Omega}
\newcommand{\la}{\langle}
\newcommand{\ra}{\rangle}
\newcommand{\de}{\delta}
\newcommand{\rhp}{\rightharpoonup}
\newcommand{\hra}{\hookrightarrow }
\DeclareMathOperator{\meas}{meas}
\newcommand{\ov}{\overline}
\newcounter{romnum}
\newcounter{arabicnum}
\begin{document}
\title[Weak solutions]
{A degenerate elliptic system with variable exponents}

\author[L. Kong]{Lingju Kong}
\address{Department of Mathematics,
University of Tennessee at Chattanooga, Chattanooga, TN 37403, USA.}
\email{Lingju-Kong@utc.edu}

\baselineskip 18pt

\begin{abstract}
We study a degenerate elliptic system with variable exponents.
Using the variational approach and some recent theory on weighted Lebesgue and Sobolev spaces
with variable exponents, we prove the existence of at least two distinct nontrivial
weak solutions of the system. Several consequences of the main theorem are derived; in particular, the existence of at lease two
distinct nontrivial nonnegative solution are established for a scalar degenerate problem. One example is provided to show
the applicability of our results.
\end{abstract}

\keywords{Degenerate elliptic systems, degenerate $p(x)$-Laplacian operator, weak solutions, weighted variable exponent spaces, mountain pass lemma.}
\subjclass[2010]{35J70, 35J20, 35J25, 35J92, 46E35, 47J10}
\maketitle

\setcounter{equation}{0}
\section{Introduction}
\label{sec1}

In this paper, we are concerned with the existence of nontrivial weak solutions of the 
elliptic system with degenerate $p_i(x)$-Laplacian operators
\begin{equation}\label{1.1}
\left\{\begin{array}{l}
-\di(w_i(x)|\na u_i|^{p_i(x)-2}\na u_i)=\lm f_i(x,u_1,\ldots,u_n)\quad  \text{in}\  \Om,\ i=1,\ldots,n, \\\noalign{\medskip}
u_i=0\quad \text{on}\  \pom,\ i=1,\ldots,n,
\end{array}
\right.
\end{equation}
where $n,N\in\N$, $\Om\subset\R^N$ is a bounded domain with a Lipschitz boundary $\pom$,
$\lm>0$ is a parameter, $p_i\in C_+(\ov{\Om}):=C(\ov{\Om},(1,\infty))$, $w_i$ are weight functions defined in $\Om$, i.e., functions 
measurable and positive a.e. in $\Om$, and $f_i\in C(\Om\times\R^n,\R)$ are such that that there exists a function
$F\in C^1({\Om}\times \R^n,\R)$ such that 
$\na F(x,t_1,\ldots,t_n)=(f_1(x, t_1,\ldots,t_n),\ldots,f_n(x, t_1,\ldots,t_n))$ in $\Om\times\R^n$.
Here, the operators $\di(w_i(x)|\na u_i|^{p_i(x)-2}\na u_i)$, $i=1,\ldots,n$, are called degenerate $p(x)$-Laplacian operators,
and as usual, when $w_i(x)\equiv 1$, they are called $p(x)$-Laplacian operators.

The degeneracy or singularity of system \eqref{1.1} is considered in the case that the weight functions $w_i$, $i=1,\ldots, n$, 
are allowed to be unbounded and/or not separated from zero. The character of the operator $\di(w_i(x)|\na u_i|^{p_i(x)-2}\na u_i)$ can be 
interpreted as a degeneration or as a singularity of $\di(\na u_i|^{p_i(x)-2}\na u_i)$. 
As is well known, degenerate phenomena occur frequently in many areas (\!\!\cite{dd, dl}).
Degenerated quasilinear elliptic equations with $p$-Laplacian were extensively studied in the
1990s and the related results were summarized in the monograph \cite{dkn}.
On the other hand, differential equations and variational problems with variable exponents 
have applications in mathematical physics (\!\!\cite{as, clr, mr, vz}).
Moreover, in recent years, degenerate elliptic problems with variable exponents have attracted the attention of several researchers
and many papers have been published to study these problems. See,
for example, \cite{HS1, HS2, kwz, ki, MR}. We point out that the study of these problems relies heavily on the
theory for weighted variable exponent Lebesgue and Sobolev spaces, which is briefly reviewed in Section \ref{sec2}.

In the literature, many results have been obtained for various variations of the scalar case of system \eqref{1.1},
and in the following, we just mention a few of them. 
Mih\u{a}ilescu and R\u{a}dulescu \cite{MR} studied the existence of at least two nontrivial nonnegative weak solutions 
for the problem
\begin{equation}\label{1.2}
\left\{\begin{array}{l}
-\di(a(x,\nabla u))=\lm \left(u^{\gamma-1}-u^{\beta-1}\right)\quad \text{in}\  \Om,\  \\\noalign{\medskip}
u=0\quad \text{on}\  \pom,
\end{array}
\right.
\end{equation}
where $a : \ov{\Om}\times\R^N\to\R^N$ is continuous and satisfies, among others, the condition that there exist $p\in C_+(\ov{\Om})$ and $c_1>0$ such that 
$|a(x, \xi)|\leq c_1\left(1+|\xi|^{p(x)-1}\right)$ for all $x\in\ov{\Om}$ and $\xi\in\R^N$
and $1<\beta<\gamma<\min_{x\in\ov{\Om}}p(x)$.
Kim et al. \cite{kwz} studied the global behavior of the set of solutions for the problem
\begin{equation}\label{1.2+}
\left\{\begin{array}{l}
-\di(w(x)|\na u|^{p(x)-2}\na u)=\mu g(x)|u|^{p(x)-2}u+f(\lm, x, u, \na u)\quad \text{in}\  \Om,\  \\\noalign{\medskip}
u=0\quad \text{on}\  \pom.
\end{array}
\right.
\end{equation}
Ho and Sim \cite{ki} considered the existence and multiplicity of weak solutions to the degenerate problem
\begin{equation}\label{1.3}
\left\{\begin{array}{l}
-\di(a(x,\nabla u))=\lm f(x,u)\quad \text{in}\  \Om,\  \\\noalign{\medskip}
u=0\quad \text{on}\  \pom.
\end{array}
\right.
\end{equation}
Especially, under some suitable conditions on $a(x,\xi)$ and $f(x,t)$, they proved the existence of two nontrivial solutions for problem \eqref{1.3}.
See \cite[Theorem 4.5]{ki}.
In another recent paper \cite{HS1}, the authors studied the existence and multiplicity of nontrivial nonnegative solutions for the problem
\begin{equation}\label{1.4}
\left\{\begin{array}{l}
-\di(w(x)|\na u|^{p(x)-2}\na u)=\lm a(x)|u|^{q(x)-2}u+\mu b(x)|u|^{h(x)-2}u\quad \text{in}\  \Om,\  \\\noalign{\medskip}
u=0\quad \text{on}\  \pom,
\end{array}
\right.
\end{equation}
where $p, q, h\in C_+(\ov{\Om})$, $w$, $a$, $b$ are measurable and positive a.e. in $\Om$, and $\lm$, $\mu$ are real parameters. 
Under some appropriate conditions on $w$, the authors proved, among others, that
for each $\mu>0$, there exists $\ov{\lm}=\ov{\lm}(\mu)>0$ such that for all $\lm\in (0,\ov{\lm})$, problem \eqref{1.4} has two nontrivial nonnegative solutions. See
\cite[Corollary 3.3]{HS1}.
R\u{a}dulescu and Repov\u{s} \cite{rr1} applied monotonicity methods and variational arguments to study the existence of positive solutions of
the problems
\begin{equation*}
\left\{\begin{array}{l}
-\Delta u=\lm k(x)u^q\pm h(x)u^q\quad \text{in}\  \Om,\  \\\noalign{\medskip}
u=0\quad \text{on}\  \pom,
\end{array}
\right.
\end{equation*}
where $h, \ k$ are nonnegative.

Motivated by these works, in this paper, we study the existence of at least two nontrivial weak solutions of system \eqref{1.1}.
We find sufficient conditions under which there exists $\lm_0>0$ such that system \eqref{1.1} has at least two distinct nontrivial weak solutions for all $\lm\in (\lm_0,\infty)$.
The proof of our main theorem is variational in nature; in particular, 
the classic mountain pass lemma of Ambrosetti and Rabinowitz is utilized in the proof.
Several corollaries of the main theorem are obtained, and in particular, one corollary establishes the existence 
of two distinct nontrivial nonnegative weak solutions for a scalar degenerate problem.
This paper extends and develops many existing ideas and results in the literature, for example, 
those in \cite{HS1, HS2,ki, MR}, to deal with the situation where the existence of multiple solutions are investigated
for a degenerate system instead of a scalar equation. 

Finally, we comment that, with little modification of the arguments, the results obtained in this paper can be extended to the problem
\begin{equation}\label{1.5}
\left\{\begin{array}{l}
-\di(a_i(x,\na u_i))=\lm f_i(x,u_1,\ldots,u_n)\quad \text{in}\  \Om,\ i=1,\ldots,n, \\\noalign{\medskip}
u_i=0\quad \text{on}\  \pom,\ i=1,\ldots,n,
\end{array}
\right.
\end{equation}
where $a_i$, $i=1,\ldots, n$, satisfy some appropriate properties. (See, for example, the conditions (A0)--(A5) in \cite{ki}). For the ease of the discussion, 
we study system \eqref{1.1} instead of \eqref{1.5} in this paper.


The rest of this paper is organized as follows. Section \ref{sec2} contains some 
preliminary results, Section \ref{sec3} contains the main results, and the proofs
of the main results are given in Section \ref{sec4}.


\setcounter{equation}{0}
\section{Preliminary results}
\label{sec2}

Let $\Om\subset\R^N$ be a bounded domain with a Lipschitz boundary $\pom$, $p\in C_+(\ov{\Om})$, and
$w(x)$ be a weight function in $\Om$.
In this section, we only review some basic results for weighted Lebesgue and Sobolev spaces
$L^{p(x)}(w, \Om)$ and $W^{1,p(x)}(w,\Om)$  with variable exponents.
These results can be found, for example, in \cite{HS1, HS2, kwz}.
For results on unweighted variable exponent Lebesgue and Sobolev spaces $L^{p(x)}(\Om)$ and $W^{1,p(x)}(\Om)$, 
we refer the reader to \cite{DHHR, FH}
and the references therein. 

The variable exponent Lebesgue space $L^{p(x)}(w, \Om)$ is defined by
\begin{eqnarray*}
L^{p(x)}(w, \Om)=\left\{u : \Om\to\R \ \text{is measurable and}\  \int_{\Om}w(x)|u(x)|^{p(x)}dx<\infty\right\}.
\end{eqnarray*}
Then, $L^{p(x)}(w, \Om)$ is a normed space equipped with the Luxemburg norm
\begin{equation*}
|u|_{L^{p(x)}(w, \Om)}=\inf\left\{\lm>0\ :\ \int_{\Om}w(x)\left|\frac{u}{\lm}\right|^{p(x)}dx\leq 1\right\}.
\end{equation*}
When $w(x)\equiv 1$ in $\Om$, we write $L^{p(x)}(w, \Om)$ as $L^{p(x)}(\Om)$ and we use the notation
$|u|_{L^{p(x)}(\Om)}$ instead of $|u|_{L^{p(x)}(w, \Om)}$.

Throughout this paper, for any $h\in C_+(\ov{\Om})$, we use the notations:
\begin{equation}\label{2.1}
h^+=\max_{x\in\Om}h(x)\quad   \text{and}\quad   h^-=\min_{x\in\Om}h(x),
\end{equation}
and let $\hat{h}(x)$ denote the conjugate of $h(x)$, i.e., $1/h(x)+1/\hat{h}(x)=1$.

The following proposition can be found in \cite[Proposition 2.4]{FH}.

\begin{proposition}\label{p2.1}
The space $L^{p(x)}(\Om)$ is a separable and uniformly convex Banach space, and its 
conjugate space is $L^{\hat{p}(x)}(\Om)$.
Moreover, for any $u\in L^{p(x)}(\Om)$ and $v\in L^{\hat{p}(x)}(\Om)$, we have the following H\"{o}lder-type inequality
\begin{equation*}
\left|\int_{\Om}uvdx\right|\leq \left(\frac{1}{p^-}+\frac{1}{\hat{p}^-}\right)|u|_{L^{p(x)}(\Om)}|v|_{L^{\hat{p}(x)}(\Om)}\leq 2|u|_{L^{p(x)}(\Om)}|v|_{L^{\hat{p}(x)}(\Om)}.
\end{equation*}

\end{proposition}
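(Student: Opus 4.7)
The plan is to verify each assertion of Proposition~\ref{p2.1} for the unweighted space $L^{p(x)}(\Om)$ using the modular functional $\rho(u):=\int_{\Om}|u(x)|^{p(x)}\,dx$ as the central tool. First I would check that the Luxemburg norm is actually a norm: positive homogeneity comes from a change of variable inside the defining infimum, and the triangle inequality follows from the pointwise convexity of $t\mapsto t^{p(x)}$. For completeness, given a Cauchy sequence $(u_n)$, I would extract a subsequence with $|u_{n_{k+1}}-u_{n_k}|_{L^{p(x)}(\Om)}\le 2^{-k}$, show via the standard modular--norm comparison that the series $\sum_k(u_{n_{k+1}}-u_{n_k})$ converges absolutely a.e.\ to a function $u$, and then invoke Fatou's lemma applied to $\rho$ to conclude that $u\in L^{p(x)}(\Om)$ and $u_n\to u$ in norm. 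Separability is then established by showing that simple functions with rational coefficients supported on members of a countable generating algebra are dense, via truncation together with a dominated-type argument for the modular.

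The main obstacle is uniform convexity. My plan is to first prove that $\rho$ is uniformly convex on its modular unit ball: since $p^->1$, there is a constant $C=C(p^-,p^+)>0$ such that pointwise in $x$ one has the Clarkson-type inequality
\[
\left|\frac{a+b}{2}\right|^{p(x)}+C\left|\frac{a-b}{2}\right|^{p(x)}\le \frac{|a|^{p(x)}+|b|^{p(x)}}{2}
\]
for all $a,b\in\R$. Integrating yields uniform convexity of $\rho$, which transfers to the Luxemburg norm through the standard comparison between the modular-unit ball and the norm-unit ball. For the duality $(L^{p(x)}(\Om))^*\cong L^{\hat{p}(x)}(\Om)$, I would define the canonical embedding $v\mapsto \Lambda_v$, $\Lambda_v(u)=\int_{\Om}uv\,dx$, use the Hölder inequality (proved below) to get $\|\Lambda_v\|\le 2|v|_{L^{\hat{p}(x)}(\Om)}$, and obtain a matching lower bound by testing against a suitably normalized version of $\sgn(v)|v|^{\hat{p}(x)-1}$. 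Surjectivity then follows from a Radon--Nikodym argument applied to the set function $E\mapsto\Lambda(\chi_E)$ for a given $\Lambda\in (L^{p(x)}(\Om))^*$.

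For the Hölder-type inequality, the plan is to reduce to the normalized case by homogeneity, namely to assume $|u|_{L^{p(x)}(\Om)}=|v|_{L^{\hat{p}(x)}(\Om)}=1$, in which case the unit-ball characterization of the Luxemburg norm gives $\int_{\Om}|u|^{p(x)}\,dx\le 1$ and $\int_{\Om}|v|^{\hat{p}(x)}\,dx\le 1$. Applying the pointwise Young inequality $|uv|\le |u|^{p(x)}/p(x)+|v|^{\hat{p}(x)}/\hat{p}(x)$ and integrating over $\Om$ yields
\[
\left|\int_{\Om}uv\,dx\right|\le \frac{1}{p^-}+\frac{1}{\hat{p}^-}\le 2,
\]
and rescaling via $u\mapsto u/|u|_{L^{p(x)}(\Om)}$ and $v\mapsto v/|v|_{L^{\hat{p}(x)}(\Om)}$ produces the full inequality exactly as stated.
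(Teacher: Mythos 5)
The paper does not actually prove this proposition; it quotes it from \cite[Proposition 2.4]{FH}, so the only question is whether your blind argument is sound. Most of it is: the norm axioms, completeness via a rapidly Cauchy subsequence and Fatou's lemma applied to the modular, separability (which uses $p^+<\infty$, guaranteed here since $p\in C_+(\ov{\Om})$ and $\Om$ is bounded), the duality via H\"older plus a Radon--Nikodym argument, and the H\"older inequality itself via normalization and the pointwise Young inequality are all standard and correct as outlined.

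The genuine gap is the pointwise Clarkson-type inequality on which you base uniform convexity. The inequality
\[
\left|\frac{a+b}{2}\right|^{p(x)}+C\left|\frac{a-b}{2}\right|^{p(x)}\le \frac{|a|^{p(x)}+|b|^{p(x)}}{2}
\]
is false for every $C>0$ at any point where $p(x)<2$. Take $a=1+\ep$, $b=1-\ep$: the left side is $1+C\ep^{p(x)}$ while the right side is $1+\tfrac{p(x)(p(x)-1)}{2}\ep^{2}+O(\ep^{4})$, and since $\ep^{p(x)}\gg\ep^{2}$ as $\ep\to 0$ when $p(x)<2$, no constant $C>0$ can work. (For $p(x)\ge 2$ the inequality does hold with $C=1$; that is Clarkson's first inequality.) Since the proposition allows arbitrary $p\in C_+(\ov{\Om})$, you cannot assume $p^-\ge 2$, so the key lemma for the hardest assertion of the proposition is not true as stated. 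The standard repair is to treat the sets $\{p\ge 2\}$ and $\{1<p<2\}$ separately: on the latter one uses either Clarkson's second inequality, whose deficiency term carries the conjugate exponent $\hat{p}(x)$ and a different homogeneity, or the weaker notion of uniform convexity of the modular, namely that for each $\ep>0$ there is $\delta>0$ with $\left|\frac{a+b}{2}\right|^{p}\le(1-\delta)\frac{|a|^{p}+|b|^{p}}{2}$ whenever $|a-b|\ge\ep\max(|a|,|b|)$, and then transfers this to the Luxemburg norm. Without one of these substitutes your argument for uniform convexity does not go through.
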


Propositions \ref{p2.2}--\ref{p2.4} are taken from \cite[Propositions \ref{p2.2}--\ref{p2.4}]{HS2}, respectively.

\begin{proposition}\label{p2.2}
Let the modular $\rho: L^{p(x)}(w, \Om)\to\R$ be defined by
$$
\rho(u)=\int_{\Om}w(x)|u|^{p(x)}dx\quad   \text{for all}\  u\in L^{p(x)}(w,\Om).
$$
Then, for all $u\in L^{p(x)}(w,\Om)$, we have

\begin{itemize}

\item[(a)] $|u|_{L^{p(x)}(w,\Om)}>1$  \mbox{(}$=1$, $<1$) $\Longleftrightarrow$ $\rho(u)>1$ ($=1$, $<1$), respectively.

\item[(b)]  $|u|_{L^{p(x)}(w,\Om)}\geq 1$ $\Longrightarrow$ $|u|_{L^{p(x)}(w,\Om)}^{p^-}\leq \rho(u)\leq |u|_{L^{p(x)}(w,\Om)}^{p^+}${;}

\item[(c)] $|u|_{L^{p(x)}(w,\Om)}\leq 1$ $\Longrightarrow$ $|u|_{L^{p(x)}(w,\Om)}^{p^+}\leq \rho(u)\leq |u|_{L^{p(x)}(w,\Om)}^{p^-}$;

\item[(d)] $|u|_{L^{p(x)}(w,\Om)}^{p^-}-1\leq \rho(u)\leq |u|_{L^{p(x)}(w,\Om)}^{p^+}+1$.

\end{itemize}
\end{proposition}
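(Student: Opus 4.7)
The plan is to reduce everything to the single key identity
\[
\rho\bigl(u/|u|_{L^{p(x)}(w,\Om)}\bigr) = 1 \qquad\text{whenever } |u|_{L^{p(x)}(w,\Om)} > 0,
\]
and then deduce (a)--(d) by bracketing $\|u\|^{p(x)}$ between $\|u\|^{p^-}$ and $\|u\|^{p^+}$, where I abbreviate $\|u\| := |u|_{L^{p(x)}(w,\Om)}$.

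To establish the identity I would study the function $F(\lambda) := \rho(u/\lambda) = \int_\Om w(x)|u(x)|^{p(x)} \lambda^{-p(x)}\,dx$ on $(0,\infty)$. Since $u \in L^{p(x)}(w,\Om)$, the pointwise bound $\lambda^{-p(x)} \le \max(\lambda^{-p^-},\lambda^{-p^+})$ shows $F(\lambda)<\infty$ for every $\lambda>0$, and dominating $w(x)|u(x)|^{p(x)}\lambda^{-p(x)}$ near any fixed $\lambda_0>0$ by a constant multiple of $w(x)|u(x)|^{p(x)}$ yields continuity of $F$ via dominated convergence. Since $F$ is strictly decreasing with $F(\lambda)\to\infty$ as $\lambda\to 0^+$ and $F(\lambda)\to 0$ as $\lambda\to\infty$, $F$ is a homeomorphism of $(0,\infty)$ onto itself, so the infimum defining $\|u\|$ is attained as the unique solution of $F(\lambda)=1$.

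With the identity in hand, the remaining parts drop out algebraically. For (b) and (c) I would write $\rho(u) = \int_\Om w(x)\|u\|^{p(x)}|u/\|u\||^{p(x)}\,dx$ and use $\|u\|^{p^-} \le \|u\|^{p(x)} \le \|u\|^{p^+}$ for $\|u\|\ge 1$ (respectively $\|u\|^{p^+} \le \|u\|^{p(x)} \le \|u\|^{p^-}$ for $0<\|u\|\le 1$) to sandwich $\rho(u)$ between the corresponding powers of $\|u\|$, the case $\|u\|=0$ being trivial. Part (a) then follows immediately from (b) and (c): e.g.\ $\|u\|>1$ implies $\rho(u)\ge\|u\|^{p^-}>1$ by (b), and conversely $\rho(u)>1$ excludes $\|u\|\le 1$ by (c), with the $=1$ and $<1$ cases analogous. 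For (d) I would split into the regime $\|u\|\ge 1$, where (b) already gives a tighter bound than (d), and the regime $0\le\|u\|\le 1$, where $\|u\|^{p^-}-1\le 0\le\rho(u)\le\|u\|^{p^-}\le\|u\|^{p^+}+1$ by (c). The one technical point in the whole argument is the continuity and properness of $F$, which guarantees attainment of the infimum; the rest is elementary manipulation of the exponent trapped between $p^-$ and $p^+$.
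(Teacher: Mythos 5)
Your argument is correct. Note, however, that the paper itself does not prove Proposition~\ref{p2.2}: it is quoted verbatim from \cite{HS2} (``Propositions \ref{p2.2}--\ref{p2.4} are taken from \cite{HS2}''), so there is no in-paper proof to compare against. What you give is the standard argument for the norm--modular relations in (weighted) variable exponent spaces: the unit-sphere identity $\rho(u/|u|_{L^{p(x)}(w,\Om)})=1$, obtained by showing that $\lambda\mapsto\rho(u/\lambda)$ is a continuous, strictly decreasing bijection of $(0,\infty)$ onto itself, followed by sandwiching $\|u\|^{p(x)}$ between $\|u\|^{p^-}$ and $\|u\|^{p^+}$. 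All the steps check out, including the implicit use of the hypothesis that $w>0$ a.e.\ (which guarantees $\rho(u)>0$ whenever $u\neq 0$ on a set of positive measure, so that $F(\lambda)\to\infty$ as $\lambda\to 0^+$ and the equation $F(\lambda)=1$ is solvable), and the deduction of (a) from (b) and (c) is not circular since the latter rest only on the unit-sphere identity.
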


\begin{proposition}\label{p2.3}
For $u_l,u\in L^{p(x)}(w,\Om)$, the following two statements are equivalent:

\begin{itemize}

\item[(a)] $\lim_{l\to\infty}|u_l-u|_{L^{p(x)}(w,\Om)}=0$;

\item[(b)] $\lim_{l\to\infty}\rho(u_l-u)=0$.

\end{itemize}

\end{proposition}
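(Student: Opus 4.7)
The plan is to deduce this equivalence directly from the sharp two-sided comparison between the Luxemburg norm and the modular provided by Proposition \ref{p2.2}. Since both statements concern convergence to zero, in each direction it suffices to work in the regime where the relevant quantity is bounded by $1$, so the decisive estimate will be part (c) of Proposition \ref{p2.2}: whenever $|v|_{L^{p(x)}(w,\Om)}\leq 1$, one has $|v|_{L^{p(x)}(w,\Om)}^{p^+}\leq \rho(v)\leq |v|_{L^{p(x)}(w,\Om)}^{p^-}$.

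For the implication (a) $\Rightarrow$ (b), I would observe that if $|u_l-u|_{L^{p(x)}(w,\Om)}\to 0$, then eventually $|u_l-u|_{L^{p(x)}(w,\Om)}\leq 1$, so the right half of the key estimate applied with $v=u_l-u$ gives $\rho(u_l-u)\leq |u_l-u|_{L^{p(x)}(w,\Om)}^{p^-}$. Since $p^-\geq 1>0$, the right side tends to zero, and (b) follows.

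For (b) $\Rightarrow$ (a), suppose $\rho(u_l-u)\to 0$. Then $\rho(u_l-u)<1$ for $l$ large, and part (a) of Proposition \ref{p2.2} guarantees $|u_l-u|_{L^{p(x)}(w,\Om)}<1$ as well. The left half of the key estimate then yields $|u_l-u|_{L^{p(x)}(w,\Om)}\leq \rho(u_l-u)^{1/p^+}$, and the right side tends to zero, giving (a).

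There is essentially no hard step; the content is already packaged in Proposition \ref{p2.2}. The only point requiring care is that part (b) of that proposition (the regime where the norm or the modular exceeds $1$) yields the wrong direction of inequality for each implication above, so one must invoke part (a) to confirm that we are in the $\leq 1$ regime before applying part (c). Convergence to zero is precisely what allows this reduction, which is why the equivalence holds without any additional hypothesis on $w$ or $p$ beyond those already standing.
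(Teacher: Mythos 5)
Your argument is correct: both implications follow from Proposition \ref{p2.2} exactly as you describe, using part (a) to place yourself in the unit-ball regime and part (c) to transfer convergence between the norm and the modular. The paper itself gives no proof of Proposition \ref{p2.3} (it is quoted from \cite{HS2}), but your derivation is the standard one underlying that reference, so there is nothing to add.
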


\begin{proposition}\label{p2.4}
Assume that $p\in C_+(\ov{\Om})$ and $q\in C(\ov{\Om},\R)$ satisfy $pq\in C_+(\ov{\Om})$.Then,
for all $u\in L^{p(x)q(x)}(w,\Om)$, we have
\begin{itemize}

\item[(a)]  $|u|_{L^{p(x)q(x)}(w,\Om)}\geq 1$ $\Longrightarrow$
$|u|^{\frac{(pq)^-}{p^+}}_{L^{p(x)q(x)}(w,\Om)}\leq |u^q|_{L^{p(x)}(w,\Om)}\leq |u|^{\frac{(pq)^+}{p^-}}_{L^{p(x)q(x)}(w,\Om)}$;

\item[(b)] $|u|_{L^{p(x)q(x)}(w,\Om)}\leq 1$ $\Longrightarrow$
$|u|^{\frac{(pq)^+}{p^-}}_{L^{p(x)q(x)}(w,\Om)}\leq |u^q|_{L^{p(x)}(w,\Om)}\leq |u|^{\frac{(pq)^-}{p^+}}_{L^{p(x)q(x)}(w,\Om)}$.

\end{itemize}

As a consequence, we always have
$$
|u^q|_{L^{p(x)}(w,\Om)}\leq 1+|u|^{\frac{(pq)^+}{p^-}}_{L^{p(x)q(x)}(w,\Om)}.
$$
\end{proposition}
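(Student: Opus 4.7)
The plan is to reduce everything to the single identity
\[
\int_{\Om} w(x)|u^q|^{p(x)}\,dx = \int_{\Om} w(x)|u|^{p(x)q(x)}\,dx,
\]
which says the modular of $u^q$ in $L^{p(x)}(w,\Om)$ equals the modular of $u$ in $L^{p(x)q(x)}(w,\Om)$ (noting $pq\in C_+(\ov{\Om})$ so the latter space is well defined). Write $A=|u|_{L^{p(x)q(x)}(w,\Om)}$ and $B=|u^q|_{L^{p(x)}(w,\Om)}$, and let $\rho$ denote the common value of the integral above. I would then apply Proposition \ref{p2.2} twice: once to $u$ in $L^{p(x)q(x)}(w,\Om)$ (with exponents $(pq)^-,(pq)^+$), and once to $u^q$ in $L^{p(x)}(w,\Om)$ (with exponents $p^-,p^+$).

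Next I would split into the two cases. In case (a), $A\geq 1$ gives $A^{(pq)^-}\leq \rho\leq A^{(pq)^+}$ by Proposition \ref{p2.2}(b); in particular $\rho\geq 1$, so by Proposition \ref{p2.2}(a) applied to $u^q$ we get $B\geq 1$, hence $B^{p^-}\leq \rho\leq B^{p^+}$. Combining the lower bound on $\rho$ from one and the upper bound from the other yields $A^{(pq)^-}\leq B^{p^+}$ and $B^{p^-}\leq A^{(pq)^+}$, i.e.
\[
A^{(pq)^-/p^+}\leq B\leq A^{(pq)^+/p^-},
\]
which is exactly (a). Case (b) is completely symmetric: $A\leq 1$ gives $A^{(pq)^+}\leq \rho\leq A^{(pq)^-}\leq 1$ by Proposition \ref{p2.2}(c), so $B\leq 1$ by Proposition \ref{p2.2}(a), and then Proposition \ref{p2.2}(c) for $u^q$ produces $B^{p^+}\leq \rho\leq B^{p^-}$. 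Matching inequalities on $\rho$ gives $A^{(pq)^+/p^-}\leq B\leq A^{(pq)^-/p^+}$, which is (b).

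For the final consequence, I would argue case by case. If $A\geq 1$, the right-hand bound in (a) directly yields $B\leq A^{(pq)^+/p^-}\leq 1+A^{(pq)^+/p^-}$. If $A\leq 1$, then since $(pq)^-/p^+>0$ we have $A^{(pq)^-/p^+}\leq 1$, so by (b), $B\leq A^{(pq)^-/p^+}\leq 1\leq 1+A^{(pq)^+/p^-}$. In both situations the displayed consequence follows.

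There is no real obstacle here; the whole proof is a bookkeeping exercise on top of the modular--norm correspondence in Proposition \ref{p2.2}. The one conceptual point worth emphasizing, and the only place the hypothesis $pq\in C_+(\ov{\Om})$ is used in a nontrivial way, is the identification of the two modulars, which presupposes that $p(x)q(x)\geq 1$ so that the space $L^{p(x)q(x)}(w,\Om)$ makes sense and Proposition \ref{p2.2} is applicable with exponents $(pq)^-$ and $(pq)^+$.
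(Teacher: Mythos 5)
Your proof is correct and is essentially the standard modular argument: the paper itself does not prove Proposition \ref{p2.4} but cites it from \cite{HS2}, where the proof is exactly this reduction to the identity $\int_{\Om}w(x)|u^q|^{p(x)}dx=\int_{\Om}w(x)|u|^{p(x)q(x)}dx$ followed by two applications of Proposition \ref{p2.2}. Your case analysis and the derivation of the final consequence are all sound, so there is nothing to add.
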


The weighted variable exponent Sobolev space $W^{1,p(x)}(w,\Om)$ is defined by
\begin{equation*}
W^{1,p(x)}(w,\Om)=\left\{u\in L^{p(x)}(\Om)\ :\ |\nabla u|\in L^{p(x)}(w, \Om)\right\},
\end{equation*}
equipped with the norm
\begin{equation*}
|u|_{W^{1,p(x)}(w,\Om)}=|u|_{L^{p(x)}(\Om)}+|\na u|_{L^{p(x)}(w,\Om)}.
\end{equation*}
$W_0^{1,p(x)}(w,\Om)$ is defined as the closure of $C_0^{\infty}(\Om)$ in $W^{1,p(x)}(w,\Om)$
with respect to the norm $|u|_{W^{1,p(x)}(w,\Om)}$.

To assure some basic properties of $W^{1,p(x)}(w,\Om)$, we assume that the weight $w$ satisfies
the condition:

\begin{itemize}

\item[(W)] $w\in L^1_{\text{loc}}(\Om)$ and $w^{-s(\cdot)}\in L^1(\Om)$ for some 
$s\in C(\ov{\Om})$ satisfying $s(x)\in \left(\frac{N}{p(x)},\infty\right)\cap \left[\frac{1}{p(x)-1},\infty\right)$
for all $x\in \ov{\Om}$.

\end{itemize}

Proposition \ref{p2.5} was proved in \cite[Proposition 2.4]{ki}.

\begin{proposition}\label{p2.5}
Assume that (W) holds. Then, $W^{1,p(x)}(w,\Om)$ is a separable and reflexive Banach space.

\end{proposition}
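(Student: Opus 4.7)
The plan is to realize $W^{1,p(x)}(w,\Om)$ as the image of an isometry into a finite product of weighted and unweighted variable exponent Lebesgue spaces, to show that this product space is separable and reflexive and that the image is closed, and then to transfer these properties back along the isometry. Concretely, I would introduce
\[ E:=L^{p(x)}(\Om)\times \left[L^{p(x)}(w,\Om)\right]^N \]
endowed with the product norm $|(u,v_1,\ldots,v_N)|_E=|u|_{L^{p(x)}(\Om)}+\sum_{j=1}^N|v_j|_{L^{p(x)}(w,\Om)}$, and the map $T:W^{1,p(x)}(w,\Om)\to E$ defined by $T(u)=(u,\partial_1 u,\ldots,\partial_N u)$. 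By the very definition of the norm on $W^{1,p(x)}(w,\Om)$, $T$ is an isometric embedding.

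The first technical step is to verify that $L^{p(x)}(w,\Om)$ itself is separable and reflexive. Because $p\in C_+(\ov{\Om})$ and $\ov{\Om}$ is compact, one has $1<p^-\leq p^+<\infty$, so the modular $\rho$ of Proposition \ref{p2.2} and its conjugate both satisfy the $\Delta_2$-condition. Separability then follows from density of simple functions with rational values supported on a countable base of measurable subsets of $\Om$, and Clarkson-type inequalities adapted to the modular give uniform convexity, hence reflexivity by Milman--Pettis. Combined with Proposition \ref{p2.1} for the unweighted factor, this shows that $E$ is separable and reflexive.

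The main step is to prove that $T(W^{1,p(x)}(w,\Om))$ is closed in $E$. Take a sequence $(u_k)\subset W^{1,p(x)}(w,\Om)$ with $u_k\to u$ in $L^{p(x)}(\Om)$ and $\na u_k\to v$ in $[L^{p(x)}(w,\Om)]^N$; I must identify $v$ with $\na u$ distributionally. This is where condition (W) is used. The inequality $s(x)\geq 1/(p(x)-1)$ combined with $w^{-s(\cdot)}\in L^1(\Om)$ yields $w^{-1/(p(x)-1)}\in L^1(\Om)$, i.e., $w^{-1/p(x)}\in L^{\hat p(x)}(\Om)$. Applying the H\"older inequality of Proposition \ref{p2.1} to the factorization $|\phi|=(w^{1/p(x)}|\phi|)\cdot w^{-1/p(x)}$ for $\phi\in L^{p(x)}(w,\Om)$ gives a continuous embedding $L^{p(x)}(w,\Om)\hra L^1(\Om)$. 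Hence $\na u_k\to v$ in $L^1(\Om;\R^N)$, and passing to the limit in
\[ \int_{\Om} u_k\,\partial_j\vp\,dx=-\int_{\Om}(\partial_j u_k)\,\vp\,dx,\quad \vp\in C_0^{\infty}(\Om),\ j=1,\ldots,N, \]
yields $v=\na u$. Thus $T$ has closed range, and both separability and reflexivity transfer from $E$ back to $W^{1,p(x)}(w,\Om)$.

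The main obstacle will be arranging the variable exponent H\"older pairing in the closedness step so that the integrability condition $w^{-s(\cdot)}\in L^1(\Om)$ in (W) is actually exploitable; the lower bound $s(x)\geq 1/(p(x)-1)$ is precisely what is needed to put $w^{-1/p(x)}$ in the conjugate space $L^{\hat p(x)}(\Om)$ and so produce the $L^1_{\text{loc}}$ embedding. The upper bound $s(x)>N/p(x)$ is not required for Proposition \ref{p2.5} itself but is reserved for the Sobolev-type embeddings of $W^{1,p(x)}(w,\Om)$ invoked later in the paper.
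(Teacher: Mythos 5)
Your argument is correct and coincides with the standard proof of this result in the cited reference \cite[Proposition 2.4]{ki} (the paper itself gives no proof, only that citation): the isometric embedding of $W^{1,p(x)}(w,\Om)$ into $L^{p(x)}(\Om)\times\left[L^{p(x)}(w,\Om)\right]^N$, the use of $s(x)\geq 1/(p(x)-1)$ together with $w^{-s(\cdot)}\in L^1(\Om)$ and the boundedness of $\Om$ to place $w^{-1/p(\cdot)}$ in $L^{\hat p(\cdot)}(\Om)$ and thereby obtain $L^{p(x)}(w,\Om)\hra L^1(\Om)$, and the resulting closedness of the range are exactly the steps used there. The only simplification worth noting is that separability and reflexivity of $L^{p(x)}(w,\Om)$ follow immediately from Proposition \ref{p2.1}, since $u\mapsto w^{1/p(\cdot)}u$ is a surjective linear isometry of $L^{p(x)}(w,\Om)$ onto $L^{p(x)}(\Om)$ (the two modulars coincide), so your Clarkson-type detour for the weighted space is not needed.
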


For the function $s(x)$ given in (W) and $x\in\ov{\Om}$, let
\begin{equation}\label{2.2}
p_s(x)=\frac{p(x)s(x)}{1+s(x)}\quad  \text{and}\quad
p_s^*(x)=\left\{\begin{array}{ll}
\frac{p(x)s(x)N}{(s(x)+1)N-p(x)s(x)} &\quad \text{if}\ p_s(x)<N,\\\noalign{\medskip}
\infty &\quad  \text{if}\  p_s(x)\geq N.
\end{array}
\right.
\end{equation}

For the proofs of Propositions \ref{p2.6} and \ref{p2.7}, see \cite[Theorem 2.11 and Corollary 2.12]{kwz}, respectively.

\begin{proposition}\label{p2.6}
Assume that (W) holds. If $q\in C_+(\ov{\Om})$ satisfies $q(x)<p_s^*(x)$ on $\ov{\Om}$.
Then, there exists a continuous and compact embedding $W^{1,p(x)}(w,\Om)\hra\hookrightarrow L^{q(x)}(\Om)$.
\end{proposition}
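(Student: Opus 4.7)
The plan is to factor the desired embedding through the unweighted variable exponent Sobolev space $W^{1,p_s(x)}(\Om)$, where $p_s$ is defined in \eqref{2.2}. The first step is to prove a continuous embedding $W^{1,p(x)}(w,\Om)\hra W^{1,p_s(x)}(\Om)$ using hypothesis (W). The second step invokes the standard variable-exponent Sobolev embedding in the unweighted setting: since $p_s\in C_+(\ov{\Om})$ and $q(x)<p_s^*(x)$ on $\ov{\Om}$, one obtains a continuous and compact embedding $W^{1,p_s(x)}(\Om)\hra L^{q(x)}(\Om)$ (see, e.g., \cite{DHHR,FH}). Composing the two maps finishes the proof.

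The substance lies in the first step. For $u\in W^{1,p(x)}(w,\Om)$, I would exploit the pointwise identity
\[
|\na u|^{p_s(x)}=\bigl(w(x)|\na u|^{p(x)}\bigr)^{s(x)/(1+s(x))}\cdot w(x)^{-s(x)/(1+s(x))}
\]
and apply Proposition \ref{p2.1} with the variable conjugate pair $\alpha(x)=(1+s(x))/s(x)$ and $\hat{\alpha}(x)=1+s(x)$. The first factor belongs to $L^{\alpha(x)}(\Om)$ because its $\alpha(x)$-th power equals $w(x)|\na u|^{p(x)}$, which is integrable by the definition of $L^{p(x)}(w,\Om)$; the second factor lies in $L^{\hat{\alpha}(x)}(\Om)$ because its $\hat{\alpha}(x)$-th power equals $w(x)^{-s(x)}$, which is integrable by (W). This gives control of the modular $\int_\Om |\na u|^{p_s(x)}dx$, which Proposition \ref{p2.2} converts into an estimate on $|\na u|_{L^{p_s(x)}(\Om)}$. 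For the $L^{p(x)}(\Om)$-component of the norm, the inclusion $L^{p(x)}(\Om)\hra L^{p_s(x)}(\Om)$ on the bounded domain $\Om$ is immediate from $p_s(x)\leq p(x)$. The assumption $s(x)\geq 1/(p(x)-1)$ is used here precisely to guarantee $p_s(x)\geq 1$, so that $p_s\in C_+(\ov{\Om})$.

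The main obstacle I anticipate is the bookkeeping intrinsic to variable exponents: the modular-to-norm passage via Proposition \ref{p2.2} splits into the cases $|\cdot|_{L^{p(x)}(w,\Om)}\leq 1$ and $|\cdot|_{L^{p(x)}(w,\Om)}>1$ with different powers on each side, so producing a clean polynomial bound on $|\na u|_{L^{p_s(x)}(\Om)}$ in terms of $|u|_{W^{1,p(x)}(w,\Om)}$ requires combining several such estimates and carefully tracking the exponents $p^\pm$, $p_s^\pm$, and $s^\pm$. The remaining hypothesis in (W), namely $s(x)>N/p(x)$, is not needed for step one but is essential in step two: it is what ensures the critical exponent $p_s^*(x)$ associated with $p_s(x)$ is large enough for the subcriticality condition $q(x)<p_s^*(x)$ to be a genuine restriction and for the standard compact embedding in the unweighted variable exponent setting to apply.
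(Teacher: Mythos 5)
The paper does not actually prove Proposition \ref{p2.6}; it simply cites \cite[Theorem 2.11]{kwz}, and your argument---factoring through the unweighted space $W^{1,p_s(x)}(\Om)$ by writing $|\na u|^{p_s(x)}=\bigl(w|\na u|^{p(x)}\bigr)^{s/(1+s)}w^{-s/(1+s)}$, applying the H\"older inequality of Proposition \ref{p2.1} with the conjugate pair $(1+s(x))/s(x)$ and $1+s(x)$, and then invoking the standard subcritical compact embedding for $W^{1,p_s(x)}(\Om)$---is precisely the argument of that reference, and it is correct. The only small inaccuracy is your closing remark on the hypothesis $s(x)>N/p(x)$: its real role is to force $p_s^*(x)>p(x)$ (equivalently $p(x)s(x)>N$), which is what makes the definition of $W^{1,p(x)}(w,\Om)$ and Proposition \ref{p2.7} consistent; once $q(x)<p_s^*(x)$ is assumed, the compact embedding in your second step does not use it.
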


\begin{proposition}\label{p2.7}
Assume that $(W)$ holds. Then
\begin{equation*}
|u|_{L^{p(x)}(\Om)}\leq C |\na u|_{L^{p(x)}(w,\Om)}\quad  \text{for all}\ u\in C_0^{\infty}(\Om),
\end{equation*}
where $C$ is a constant independent of $u$.

\end{proposition}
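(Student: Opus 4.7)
The plan is to deduce the desired Poincaré inequality from the classical variable-exponent Sobolev/Poincaré theory on the (unweighted) space $W^{1,p_s(x)}_0(\Om)$, combined with a Hölder-type estimate that trades the weight $w$ against the integrable factor $w^{-s(\cdot)}$ guaranteed by (W). The key observation is that (W) forces $s(x) > N/p(x)$, which translates via the formulas in \eqref{2.2} into the pointwise inequality $p(x) < p_s^*(x)$ on $\ov{\Om}$. Consequently the classical variable-exponent Sobolev embedding together with the standard Poincaré inequality on $W^{1,p_s(x)}_0(\Om)$ (as found in \cite{DHHR, FH}) furnishes a constant $C_1>0$, independent of $u$, such that
\begin{equation*}
|u|_{L^{p(x)}(\Om)} \leq C_1 |\na u|_{L^{p_s(x)}(\Om)} \quad \text{for every } u \in C_0^{\infty}(\Om).
\end{equation*}
This reduces the problem to estimating $|\na u|_{L^{p_s(x)}(\Om)}$ in terms of $|\na u|_{L^{p(x)}(w,\Om)}$.

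Next, I would exploit the algebraic identity $p_s(x)/p(x)=s(x)/(s(x)+1)$ to factor
\begin{equation*}
|\na u|^{p_s(x)} = \bigl(|\na u|^{p(x)} w(x)\bigr)^{s(x)/(s(x)+1)} \cdot w(x)^{-s(x)/(s(x)+1)},
\end{equation*}
and then apply the generalized H\"older inequality of Proposition \ref{p2.1} with the conjugate pair $r(x)=(s(x)+1)/s(x)$ and $\hat r(x)=s(x)+1$. The resulting bound reads
\begin{equation*}
\int_\Om |\na u|^{p_s(x)} dx \leq 2\,\bigl|(|\na u|^{p(x)} w(x))^{s(x)/(s(x)+1)}\bigr|_{L^{r(x)}(\Om)} \cdot \bigl|w(x)^{-s(x)/(s(x)+1)}\bigr|_{L^{\hat r(x)}(\Om)}.
\end{equation*}
The second factor is finite since $w^{-s(\cdot)} \in L^1(\Om)$ by (W), so it contributes only a constant depending on $w$, $s$, and $\Om$. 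The first factor is controlled by the weighted modular $\int_\Om |\na u|^{p(x)} w(x)\,dx$ through Proposition \ref{p2.2} (noting that $r(x)\cdot s(x)/(s(x)+1)\equiv 1$, so raising to the power $r(x)$ recovers exactly the weighted modular).

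Finally I would convert these modular inequalities into norm inequalities by means of parts (b)--(d) of Proposition \ref{p2.2}, handling the two regimes $|\na u|_{L^{p(x)}(w,\Om)} \leq 1$ and $|\na u|_{L^{p(x)}(w,\Om)} > 1$ separately so that the correct powers of $p^{\pm}$ and $p_s^{\pm}$ line up on both sides. Chaining the resulting estimate with the unweighted Poincaré/Sobolev bound above yields the stated inequality with a single constant $C=C(\Om,p,s,w)$ independent of $u$. The main obstacle is the bookkeeping in this last step: keeping track of which of $p^{\pm}$ and $p_s^{\pm}$ appear in each modular-to-norm conversion is the fiddly part of the argument, but it is a routine application of Proposition \ref{p2.2} once the H\"older decomposition above is in place.
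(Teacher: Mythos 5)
Your argument is correct and follows the same route as the source the paper cites for this result (\cite[Corollary 2.12]{kwz}, the paper itself gives no proof): the H\"older splitting $|\na u|^{p_s(x)}=(w|\na u|^{p(x)})^{s/(s+1)}w^{-s/(s+1)}$ with conjugate exponents $(s+1)/s$ and $s+1$ to embed $W^{1,p(x)}(w,\Om)$ into $W^{1,p_s(x)}(\Om)$, followed by the unweighted Sobolev--Poincar\'e inequality, using $s(x)>N/p(x)\Leftrightarrow p(x)<p_s^*(x)$. The modular-to-norm bookkeeping you defer to the end is most cleanly handled by normalizing $|\na u|_{L^{p(x)}(w,\Om)}=1$ so that the weighted modular equals $1$ and the constant falls out by homogeneity, but your two-regime version works as well.
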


In view of Proposition \ref{p2.7}, we can define in $W_0^{1,p(x)}(w,p(x))$ an equivalent norm
$$
\|u\|_{W_0^{1,p(x)}(w,\Om)}=|\na u|_{L^{p(x)}(w,\Om)}.
$$

The space setting for problem \eqref{1.1} is the product space
$$
X=W_0^{1,p_1(x)}(w_1,\Om)\times\ldots\times W_0^{1,p_n(x)}(w_n,\Om).
$$
For any $u=(u_1,\ldots,u_n)\in X$, we equip $X$ with the norm
\begin{equation*}
\|u\|=\|u\|_{W_0^{1,p_1(x)}(w_1,\Om)}+\ldots+\|u\|_{W_0^{1,p_n(x)}(w_n,\Om)}.
\end{equation*}
Then, $X$ is a real separable and reflexive Banach space.

\begin{definition}\label{d2.1}
We say that $u=(u_1,\ldots,u_n)\in X$ is a weak solution of system \eqref{1.1} if
\begin{equation*}
\sum_{i=1}^n\int_{\Om}w_i(x)|\na u_i|^{p_i(x)-2}\na u_i\cdot \na v_i dx=
\lm\sum_{i=1}^n\int_{\Om}f_i(x, u_1,\ldots, u_n)v_i dx 
\end{equation*}
for all $v=(v_1,\ldots,v_n)\in X$.
\end{definition}

\setcounter{equation}{0}
\section{Main results}
\label{sec3}

For convenience, for any $t=(t_1,\ldots,t_n)\in\R^n$, let $|t|=\sum_{i=1}^n|t_i|$, and define
\begin{equation*}
F_0(x)=\lim_{\substack{|t|\to 0}}\frac{F(x,t_1,\ldots,t_n)}{\sum_{i=1}^n|t_i|^{p_i^+}}\quad  \text{and}\quad
F_{\infty}(x)=\lim_{\substack{|t|\to\infty}}\frac{F(x,t_1,\ldots,t_n)}{\sum_{i=1}^n|t_i|^{p_i^-}}.
\end{equation*}

We need the following conditions.
 
\begin{itemize}


\item[(H1)] For $i=1,\ldots,n$, $w_i\in L^{1}_{\text{loc}}(\Om)$ and $w_i^{-s_i(\cdot)}\in L^1(\Om)$ for some 
$s_i\in C(\ov{\Om})$ satisfying $s_i(x)\in \left(\frac{N}{p_i(x)},\infty\right)\cap \left[\frac{1}{p_i(x)-1},\infty\right)$
for all $x\in \ov{\Om}$;

\item[(H2)] for each $i=1,\ldots,n$, there exist $C>1$, $\theta_{ij}\in C_+(\ov{\Om})$ with $j=1,\ldots,n$, and $h_i\in L^{\hat{\theta}_{ii}(x)}(\Om)$
such that $\theta_{ij}(x)<p_{j,s_j}^*(x)$ in $\Om$ and
\begin{eqnarray*}
&&|f_i(x,t_1,\ldots,t_n)|\\
&\leq& h_i(x)+C\left(|t_i|^{\theta_{ii}(x)-1}
+\sum_{\substack{j\neq i, j=1}}^n|t_j|^{\frac{\theta_{ij}(x)(\theta_{ii}(x)-1)}{\theta_{ii}(x)}}\right)
\quad  \text{for all}\  (x,t_1,\ldots,t_n)\in\Om\times\R^n,
\end{eqnarray*}
where
\begin{equation*}
p_{j,s_j}(x)=\frac{p_j(x)s_j(x)}{1+s_j(x)}\  \text{and}\
p_{j,s_j}^*(x)=\left\{\begin{array}{ll}
\frac{p_j(x)s_j(x)N}{(s_j(x)+1)N-p_j(x)s_j(x)} &\ \text{if}\ p_{j,s_j}(x)<N,\\\noalign{\medskip}
\infty &\  \text{if}\  p_{j,s_j}(x)\geq N,
\end{array}
\right.
\end{equation*}
with $s_j(x)$ given in (H1);

\item[(H3)] $F\in L^{\infty}\left(\Om\times [0, T]^n\right)$ for any $T>0$;

\item[(H4)] $F_{\infty}(x)\leq 0$ uniformly for $x\in\Om$;

\item[(H5)] there exist a constant $t_0>0$ and a ball $B$ with $\ov{B}\subset \Om$ such that 
$$
\int_BF(x,t_0,\ldots,t_0)dx>0.
$$

\end{itemize}

Now, we state the main results in this paper.

\begin{theorem}\label{t3.1}
Assume that (H1)--(H5) hold.
Then, there exists $\lm_0>0$ such that system \eqref{1.1} has at least one nontrivial weak solution for each $\lm \in (\lm_0,\infty)$.
If, in addition, we further assume that
\begin{itemize}

\item[(H6)] $F_{0}(x)\leq 0$ uniformly for $x\in\Om$;

\item[(H7)] $\max\{p_1^+,\ldots, p_n^+\}<\min\{p_{1,s_1}^*(x),\ldots, p_{n,s_n}^*(x)\}$ in $\ov{\Om}$;

\item[(H8)] $F(t,0,\ldots,0)=0$ in $\Om$.

\end{itemize}
Then, system \eqref{1.1} has at least two distinct nontrivial weak solutions for each $\lm \in (\lm_0,\infty)$.

\end{theorem}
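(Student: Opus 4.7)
The plan is to treat system \eqref{1.1} variationally. Define the energy functional $J_\lambda : X \to \R$ by
\[
J_\lambda(u) = \sum_{i=1}^n \int_\Om \frac{w_i(x)}{p_i(x)} |\na u_i|^{p_i(x)}\,dx - \lm \int_\Om F(x,u_1,\ldots,u_n)\,dx.
\]
First I would verify, using the growth bound (H2) together with the compact embeddings $W_0^{1,p_i(x)}(w_i,\Om) \hra L^{\ta_{ij}(x)}(\Om)$ supplied by Proposition \ref{p2.6} (since $\ta_{ij}<p_{j,s_j}^\ast$), that $J_\lm \in C^1(X,\R)$ and that its critical points coincide with the weak solutions in Definition \ref{d2.1}. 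The derivative splits naturally as the duality pairing of the principal operator against test functions plus the compact nonlinear part.

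For the first conclusion I would produce a nontrivial global minimizer. Coercivity follows from (H4): for any $\ep>0$, (H3) and (H4) give a constant $M_\ep$ with $F(x,t) \le \ep \sum_i |t_i|^{p_i^-} + M_\ep$ on $\Om\times\R^n$. Combining this with Proposition \ref{p2.2} (modular vs.~norm), Proposition \ref{p2.7} (Poincaré), and the compact embeddings, and then choosing $\ep$ small enough relative to $\lm$, yields $J_\lm(u) \to \infty$ as $\|u\| \to \infty$. Weak lower semicontinuity follows from the convexity of the $\int w_i|\na u_i|^{p_i(x)}$ part and the compactness of the $F$-term (via (H2) and Proposition \ref{p2.6}). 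To see the minimum is negative, I use (H5) together with (H3): pick a test function $\overline{u}=(\vp,\ldots,\vp)\in X$ where $\vp$ equals $t_0$ on $\overline{B}$ and is cut off to $0$ outside a slightly larger set. Then $\int_\Om F(x,\overline{u})\,dx > 0$ for $t_0$ fixed, while the gradient part of $J_\lm$ is independent of $\lm$; hence for all $\lm > \lm_0$ (with $\lm_0$ chosen from these two fixed quantities) we get $J_\lm(\overline{u})<0$. The minimizer $u^\ast$ is therefore nontrivial.

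For the second conclusion I would apply the mountain pass lemma to $J_\lm$. Hypothesis (H8) gives $J_\lm(0)=0$, and (H6) combined with (H2) and (H7) produces, for any $\ep>0$, a constant $C_\ep$ with $F(x,t) \le \ep \sum_i |t_i|^{p_i^+} + C_\ep \sum_i |t_i|^{\ta_{ii}(x)}$, where $p_i^+ < p_{i,s_i}^\ast$ lets us embed $W_0^{1,p_i(x)}(w_i,\Om)$ compactly into $L^{p_i^+}(\Om)$ and similarly for $L^{\ta_{ii}(x)}$. A careful application of Propositions \ref{p2.2} and \ref{p2.4} then yields, for $\|u\|$ small,
\[
J_\lm(u) \ge c_1 \|u\|^{p^+} - \lm\, \ep\, c_2 \|u\|^{p^+} - \lm\, c_3 \|u\|^{\ta^-}
\]
(with $p^+=\max_i p_i^+$ and $\ta^- > p^+$ chosen appropriately from (H7)); taking $\ep$ small and $\|u\|=r$ small gives a positive infimum $\alpha>0$ on the sphere $\|u\|=r$. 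Since $J_\lm(u^\ast)<0 < \alpha$, choosing $e=u^\ast$ gives the mountain pass geometry. For the Palais--Smale condition I would argue in the standard way: any (PS) sequence is bounded by the coercivity established above, so up to a subsequence $u_l \rightharpoonup u$ weakly in $X$; the compactness of the nonlinear term via Proposition \ref{p2.6} reduces the convergence to a statement about the principal operator, whose $(S_+)$-type property (monotonicity plus the uniform convexity from Proposition \ref{p2.5}) then upgrades weak to strong convergence. The mountain pass lemma supplies a second critical point $\widetilde{u}$ at a positive level, distinct from $u^\ast$ since $J_\lm(\widetilde u)>0>J_\lm(u^\ast)$, and both are nontrivial.

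The main obstacle I expect is the interplay between modular and norm estimates in the two different regimes: coercivity at infinity uses the exponents $p_i^-$ coming from (H4), while the mountain pass geometry near the origin uses $p_i^+$ from (H6), and these have to be balanced against the subcritical exponents $\ta_{ij}$ from (H2) using Propositions \ref{p2.2} and \ref{p2.4} together with (H7). Closely related is the verification of $(S_+)$ for the sum of degenerate $p_i(x)$-Laplacians on the product space $X$, where the variable-exponent modular identities must be invoked componentwise without losing uniformity on $\Om$.
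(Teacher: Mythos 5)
Your proposal follows essentially the same route as the paper: the same energy functional, coercivity from (H3)--(H4) plus weak lower semicontinuity to get a global minimizer made nontrivial (for $\lm>\lm_0$) by the cut-off test function built from (H5), and the mountain pass lemma for the second solution, with the Palais--Smale condition obtained from coercivity together with the $(S_+)$ property of the principal operator and the compactness of the nonlinear term. The one place to tighten is the small-ball estimate: the paper takes the higher-order tail exponent to be a \emph{constant} $q$ chosen in the gap provided by (H7) and controls large $|t|$ via (H4), whereas you write the tail as $\sum_i|t_i|^{\theta_{ii}(x)}$ coming from (H2) --- since (H2) does not guarantee $\theta_{ii}^->\max_i p_i^+$, you should (as you half-indicate with ``chosen appropriately from (H7)'') replace $\theta_{ii}$ by such a $q$.
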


The following two corollaries are direct consequences of Theorem \ref{t3.1}.

\begin{corollary}\label{c3.1}
Assume that (H1)--(H3), (H5), (H7), (H8) hold, and 
\begin{equation}\label{3.1}
F(x,t_1,\ldots,t_n)\leq\sum_{i=1}^n\psi_i(t_i)\quad  \text{for all}\  (x,t_1,\ldots,t_n)\in\Om\times\R_+^n,
\end{equation}
where $\psi_i\in C(\R_+)$, $i=1,\ldots,n$, satisfy the conditions

\begin{itemize}

\item[(H9)] there exists $0<T_1\leq 1$ such that $\psi_i(t)\leq 0$ in $[0,T_1)$;

\item[(H10)] there exist $D\geq 1$, $T_2\geq 1$, and $\al_i\in C_+(\ov{\Om})$ such that $\al_i^+<p_i^-$ and
$\psi_i(t)\leq D |t|^{\al_i(x)}$ in $[T_2,\infty)$.

\end{itemize}
Then, there exists $\lm_0>0$ such that system \eqref{1.1} has at least two distinct nontrivial weak solutions for each $\lm \in (\lm_0,\infty)$.
\end{corollary}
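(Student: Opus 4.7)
The plan is to deduce Corollary~\ref{c3.1} directly from Theorem~\ref{t3.1} by verifying the two hypotheses of that theorem which are not assumed outright, namely (H4) and (H6). All of (H1)--(H3), (H5), (H7), (H8) are already postulated, so only the behavior of $F$ at $|t|\to 0$ and $|t|\to\infty$ needs to be extracted from \eqref{3.1}, (H9), and (H10). Throughout the argument I would work on the positive cone $\R_+^n$, invoking the standard reduction to nonnegative solutions (replacing each $f_i(x,t_1,\ldots,t_n)$ by $f_i(x,t_1^+,\ldots,t_n^+)$) so that \eqref{3.1} controls the effective $F$ on all of $\R^n$.

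For (H6), suppose $|t|=\sum_i|t_i|<T_1$. Then each $|t_i|<T_1\leq 1$, so (H9) forces $\psi_i(t_i)\leq 0$ for every $i$, and \eqref{3.1} gives $F(x,t)\leq\sum_i\psi_i(t_i)\leq 0$. Dividing by the strictly positive quantity $\sum_i|t_i|^{p_i^+}$ and sending $|t|\to 0$ produces $F_0(x)\leq 0$, uniformly in $x\in\Om$.

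For (H4), I would split the indices into $I=\{i:|t_i|\geq T_2\}$ and $J=\{i:|t_i|<T_2\}$. On $I$, (H10) yields $\psi_i(t_i)\leq D|t_i|^{\al_i(x)}\leq D|t_i|^{\al_i^+}$, while on $J$ the continuity of $\psi_i$ on $[0,T_2]$ gives a uniform bound $\psi_i(t_i)\leq M$. Combined with \eqref{3.1} this leads to
\[\frac{F(x,t)}{\sum_{j=1}^n|t_j|^{p_j^-}}\leq \frac{nM+D\sum_{i\in I}|t_i|^{\al_i^+}}{\sum_{j=1}^n|t_j|^{p_j^-}}.\]
As $|t|\to\infty$, at least one index $k$ satisfies $|t_k|\to\infty$, so the denominator is bounded below by $|t_k|^{p_k^-}\to\infty$, which absorbs the constant contribution from $J$. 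For every index $i\in I$ with $|t_i|\to\infty$ the strict inequality $\al_i^+<p_i^-$ guarantees $|t_i|^{\al_i^+}/|t_i|^{p_i^-}\to 0$; indices of $I$ whose $|t_i|$ remain bounded contribute a bounded numerator term killed by the blowing-up denominator. A short case analysis then yields $F_\infty(x)\leq 0$ uniformly in $x$.

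The main obstacle is precisely the case analysis behind (H4): because $\al_i^+<p_i^-$ is only a componentwise comparison (we do not assume $\max_i\al_i^+<\min_i p_i^-$), no single aggregate bound on the ratio will do; one has to match each dominant numerator term $|t_i|^{\al_i^+}$ with the corresponding denominator term $|t_i|^{p_i^-}$. Once (H4) and (H6) are in place, Theorem~\ref{t3.1} applies verbatim and produces the two distinct nontrivial weak solutions for $\lm\in(\lm_0,\infty)$.
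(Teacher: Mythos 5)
Your proposal is correct and follows the same route as the paper: the paper's proof of Corollary~\ref{c3.1} simply states that (H4) and (H6) follow from \eqref{3.1}, (H9), and (H10) ``it is easy to see'') and then invokes Theorem~\ref{t3.1}, whereas you supply the actual verification, including the index splitting at $T_2$ and the componentwise matching of $|t_i|^{\al_i^+}$ against $|t_i|^{p_i^-}$. Your remark about restricting to the positive cone (or truncating $f_i$) is a detail the paper glosses over but is consistent with its intended reading, as Example~\ref{e3.1} confirms.
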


\begin{corollary}\label{c3.2}
Assume that $a,b\in C(\ov{\Om})$ are positive, $w$ satisfies the condition (W), and 
$p, \be,\ga\in C_+(\ov{\Om})$ with $\beta(x)<\gamma(x)<p^-\leq p^+<p_s^*(x)$ in $\ov{\Om}$,
where $p_s^*(x)$ is defined in \eqref{2.2}.
Then,  there exists $\lm_0>0$ such that the problem
\begin{equation}\label{3.2}
\left\{\begin{array}{l}
-\di(w(x)|\na u|^{p(x)-2}\na u)=\lm \left(a(x)u^{\gamma(x)-1}-b(x)u^{\beta(x)-1}\right)\quad  \text{in}\  \Om, \\\noalign{\medskip}
u=0\ \text{on}\  \pom,\ 
\end{array}
\right.
\end{equation}
has at least two distinct nontrivial weak solutions for each $\lm \in (\lm_0,\infty)$.
\end{corollary}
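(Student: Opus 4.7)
The plan is to reduce Corollary \ref{c3.2} to the scalar case ($n=1$) of Theorem \ref{t3.1} by setting $p_1=p$, $w_1=w$, and
\[
F(x,t)=\frac{a(x)}{\gamma(x)}|t|^{\gamma(x)}-\frac{b(x)}{\beta(x)}|t|^{\beta(x)},\qquad f(x,t)=a(x)|t|^{\gamma(x)-2}t-b(x)|t|^{\beta(x)-2}t,
\]
so that $\partial_t F(x,t)=f(x,t)$. Interpreting the nonlinearity in \eqref{3.2} through the standard odd extension $u^{\al(x)-1}\mapsto|u|^{\al(x)-2}u$, weak solutions of this instance of \eqref{1.1} coincide with those of \eqref{3.2}, so it suffices to verify (H1)--(H8) for this choice of $F$ and invoke Theorem \ref{t3.1}.

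Most hypotheses are direct. (H1) is the assumption (W) on $w$, (H7) appears verbatim in the statement, and (H3), (H8) hold because $F$ is continuous and $F(x,0)=0$. For (H2), the fact that $1<\be(x)<\ga(x)$ on $\ov{\Om}$ yields
\[
|f(x,t)|\leq a^+|t|^{\ga(x)-1}+b^+|t|^{\be(x)-1}\leq b^++(a^++b^+)|t|^{\ga(x)-1},
\]
which matches the required form with $\theta_{11}=\ga$, $h_1\equiv b^+\in L^{\hat{\ga}(x)}(\Om)$, and a suitable constant; the subcriticality $\ga(x)<p_s^*(x)$ follows from $\ga(x)<p^+<p_s^*(x)$.

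The three remaining conditions exploit the ordering $\be<\ga<p^-\leq p^+$. For (H4), both $\ga(x)-p^-$ and $\be(x)-p^-$ are strictly negative, so $F(x,t)/|t|^{p^-}\to 0$ as $|t|\to\infty$ and $F_\infty\equiv 0$. For (H6), both $\ga(x)-p^+$ and $\be(x)-p^+$ are negative with $\be<\ga$, so the negative term $-(b(x)/\be(x))|t|^{\be(x)-p^+}$ dominates near the origin; hence $F(x,t)/|t|^{p^+}\to-\infty$ and $F_0(x)\leq 0$ uniformly. For (H5), continuity and positivity of $a,b,\be,\ga$ on the compact set $\ov{\Om}$ make $m:=\min_{\ov{\Om}}(\ga-\be)>0$ and $M:=\max_{\ov{\Om}}b\ga/(a\be)$ finite, so any $t_0>M^{1/m}$ yields $F(x,t_0)>0$ pointwise on $\ov{\Om}$ and hence $\int_BF(x,t_0)\,dx>0$ for any ball $B$ with $\ov B\subset\Om$.

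The proof introduces no genuinely new ideas; the main care is the bookkeeping of where each link in the chain $\be<\ga<p^-\leq p^+<p_s^*(x)$ is used: $\ga<p^-$ makes $\ga$ subcritical and drives (H4); the gap $\be<\ga$ supplies the sign in (H6) and the positivity in (H5); while $p^+<p_s^*(x)$ produces (H7) and underpins the compact embedding at the heart of Theorem \ref{t3.1}. The only subtle point worth noting is that the pointwise value of $F_0$ is actually $-\infty$, but since (H6) only asks for the uniform upper bound $F_0(x)\leq 0$, this strengthening is harmless. Once (H1)--(H8) are verified, Theorem \ref{t3.1} directly delivers a threshold $\lm_0>0$ such that problem \eqref{3.2} admits at least two distinct nontrivial weak solutions for every $\lm\in(\lm_0,\infty)$.
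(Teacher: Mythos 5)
Your proposal is correct and follows essentially the same route as the paper: the paper's proof of Corollary \ref{c3.2} is exactly the reduction to the scalar case of Theorem \ref{t3.1} with $f(x,t)=a(x)t^{\gamma(x)-1}-b(x)t^{\beta(x)-1}$ and $F(x,t)=\frac{a(x)}{\gamma(x)}t^{\gamma(x)}-\frac{b(x)}{\beta(x)}t^{\beta(x)}$, leaving the verification of (H1)--(H8) as ``easy to check.'' You actually supply those checks (correctly, up to the harmless adjustment $t_0>\max\{1,M^{1/m}\}$ in (H5) to ensure $t_0^{\gamma(x)-\beta(x)}\geq t_0^{m}$), so your write-up is a fleshed-out version of the paper's argument rather than a different one.
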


Corollary \ref{c3.3} below establish the existence of at least two nontrivial nonnegative weak solutions for a scalar problem.

\begin{corollary}\label{c3.3}
Assume that $g\in C(\ov{\Om})$ is positive, $w$ satisfies the condition (W) and $w\in L^{\infty}(\Om)$, 
$p\in C_+(\ov{\Om})$ with $p^-\leq p^+<p_s^*(x)$ in $\ov{\Om}$, $h\in C(\R,\R)$ with $th(t)\leq 0$ on $[0,\infty)$, and $h_0=h_{\infty}=0$,
where $p_s^*(x)$ is defined in \eqref{2.2}, $h_0=\lim_{|t|\to 0}\frac{h(t)}{|t|^{p^+-1}}$, and $h_{\infty}=\lim_{|t|\to\infty}\frac{h(t)}{|t|^{p^--1}}$.
Then,  there exists $\lm_0>0$ such that the problem
\begin{equation}\label{3.3+}
\left\{\begin{array}{l}
-\di(w(x)|\na u|^{p(x)-2}\na u)=\lm g(x)h(u)\quad  \text{in}\  \Om, \\\noalign{\medskip}
u=0\ \text{on}\  \pom,\ 
\end{array}
\right.
\end{equation}
has at least two distinct nontrivial nonnegative weak solutions for each $\lm \in (\lm_0,\infty)$.
\end{corollary}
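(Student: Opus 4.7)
The plan is to derive Corollary 3.3 from Corollary 3.1 in the scalar case $n=1$, combined with a truncation that forces the variationally produced solutions to be nonnegative. First, define $\tilde h:\R\to\R$ by $\tilde h(t)=h(t)$ for $t\geq 0$ and $\tilde h(t)=0$ for $t<0$; the assumption $h_0=0$ forces $h(0)=0$, so $\tilde h$ is continuous. I would replace the right-hand side of \eqref{3.3+} by $\lm g(x)\tilde h(u)$ and work with the potential $F(x,t):=g(x)\int_0^t\tilde h(s)\,ds$. Testing the truncated equation with $u^-\in W_0^{1,p(x)}(w,\Om)$ shows that every weak solution is nonnegative: the right-hand side vanishes on $\{u<0\}$ since $\tilde h\equiv 0$ there, while coercivity of the operator forces $u^-\equiv 0$. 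On the nonnegative cone, the truncated equation coincides with \eqref{3.3+}, so it suffices to exhibit two distinct nontrivial weak solutions of the truncated problem.

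Next I would verify the hypotheses of Corollary 3.1 for the scalar data $(p, w, g\tilde h, F)$. (H1) is hypothesis (W), (H7) is the assumed bound $p^+<p_s^*$ on $\ov{\Om}$, and (H8) holds since $F(x,0)=0$. (H3) is immediate from the continuity of $F$ on $\ov\Om\times\R$. For (H2), the two-sided decay $h_0=h_\infty=0$ together with continuity yields $|h(t)|\leq C(1+|t|^{p^--1})$ on $\R$; since $g\in L^\infty(\Om)$, one may take $\theta_{11}\equiv p^-\in C_+(\ov{\Om})$, which lies strictly below $p_s^*(x)$ by hypothesis, and $h_1$ constant, which lies in $L^{\hat\theta_{11}}(\Om)$ as $\Om$ is bounded. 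Setting $\psi(t):=\|g\|_{L^\infty(\Om)}\int_0^{t^+}h(s)\,ds$, inequality \eqref{3.1} holds on $\Om\times\R_+$. The sign condition $th(t)\leq 0$ on $[0,\infty)$ then yields $\psi\leq 0$ throughout $\R_+$, so (H9) is automatic with any $T_1\in(0,1]$, and (H10) holds trivially because $\psi\leq 0\leq D|t|^{\alpha(x)}$ for any admissible $\alpha\in C_+(\ov{\Om})$ and any $D\geq 1$.

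The main obstacle is condition (H5), which demands a ball $B\subset\Om$ and a constant $t_0>0$ with $\int_B F(x,t_0)\,dx>0$; this is precisely the mechanism that pulls the Mountain-Pass-type functional below zero at large $\lm$ and produces the nontrivial critical point. Its verification is delicate because the sign and decay assumptions on $h$ alone force $F\leq 0$ on $\Om\times\R_+$, so (H5) cannot follow from these hypotheses without a more careful use of the structure of $h$, e.g.\ exploiting a region on which $h$ is non-trivially oriented, and this is the crux of the argument. Once (H5) is in hand, Corollary 3.1 supplies two distinct nontrivial weak solutions of the truncated problem for each $\lm>\lm_0$, and these are nonnegative by the first paragraph and hence solve \eqref{3.3+}.
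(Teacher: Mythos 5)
Your overall route is the same as the paper's: reduce to the scalar case of the main existence result and then show the two solutions are nonnegative by testing the Euler--Lagrange identity with the negative part $u_-$ (the paper does this directly via Theorem \ref{t3.1} and Lemma \ref{l4.5}, without your preliminary truncation of $h$; your truncation is in fact the cleaner way to run that step, since the hypothesis $th(t)\le 0$ controls $h$ only on $[0,\infty)$ and says nothing about the sign of $h(u)u_-$ on $\{u<0\}$, which the paper's computation silently needs when it replaces $h(u^i)$ by $h(u^i_-)$). Your verification of (H1)--(H3) and (H7)--(H10) is correct and matches what the paper tacitly assumes.

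The gap you flag at (H5) is genuine, and it is the decisive one: with $F(x,t)=g(x)\int_0^t h(s)\,ds$, $g>0$, and $h\le 0$ on $[0,\infty)$, one has $F(x,t_0)\le 0$ for every $t_0>0$, so $\int_B F(x,t_0)\,dx>0$ is impossible and (H5) fails. Worse, $\Psi\le 0$ on all of $X$, hence $I=\Phi-\lm\Psi\ge 0=I(0)$, and the global-minimization step of Theorem \ref{t3.1} can only return the trivial solution, so the mechanism producing the first nontrivial solution collapses. You are right that this cannot be repaired from the stated hypotheses. Be aware that the paper's own proof does not close this gap either: it merely asserts that ``it is easy to verify that all the conditions of Theorem \ref{t3.1} are satisfied,'' which is false for (H5) (the example $h$ in Remark \ref{r3.1}(a) likewise yields $F\le 0$ everywhere). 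So your proposal is incomplete exactly where the corollary itself is problematic; a genuine completion would require changing the sign hypothesis on $h$ (or otherwise guaranteeing $\int_0^{t_0}h(s)\,ds>0$ for some $t_0>0$), after which the nonnegativity argument would have to be reexamined.
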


\begin{remark}\label{r3.1}
{\rm 
(a) One example $h$ satisfying the conditions in Corollary \ref{c3.3} is given by
\begin{equation*}
h(t)=\left\{\begin{array}{ll}
-|t|^{\mu(x)-2}t\ &\text{if}\ |t|\leq 1,\\\noalign{\medskip}
-|t|^{\nu(x)-2}t\ &\text{if}\ |t|>1,
\end{array}
\right.
\end{equation*}
where $\mu, \nu\in C_+(\ov{\Om})$ satisfy $\mu^->p^+$ and $\nu^+<p^-$.

(b) Theorem \ref{t3.1} and Corollaries \ref{c3.1}--\ref{c3.3} extend and complement some related results 
in the literature, for example, in \cite{ki, HS1, MR}.
}
\end{remark}

In the reminder of this section, we provide the following example.

\begin{example}\label{e3.1}
{\rm
In system \eqref{1.1}, let $n=2$, $w_1,w_2$ satisfy (H1), $p_1,p_2$ satisfy (H7), and
\begin{equation}\label{3.3}
f_1(x,t_1,t_2)=t_1^{r_1(x)-1}-t_1^{s_1(x)-1}+t_1t_2\quad  \text{and}\quad
f_1(x,t_1,t_2)=\frac{t_1^2}{2}+t_2^{r_2(x)-1}-t_2^{s_2(x)-1}
\end{equation}
for all $(x, t_1, t_2)\in \Om\times\R_+^2$, where $r_i, s_i\in C_+(\ov{\Om})$, $i=1,2$, satisfy 
\begin{equation}\label{3.4}
s_1(x)\leq\min\{r_1(x), 4\}\leq\max\{r_1(x),4\}<p_1^-\quad  \text{in}\ \ov{\Om}
\end{equation}
and
\begin{equation}\label{3.5}
s_2(x)\leq\min\{r_2(x), 2\}\leq\max\{r_2(x), 2\}<p_2^-\quad  \text{in}\ \ov{\Om}.
\end{equation}
Then, we claim that there exists $\lm_0>0$ such that system \eqref{1.1} has at least two distinct nontrivial weak solutions for each $\lm \in (\lm_0,\infty)$.

\medskip

In fact, from \eqref{3.3}--\eqref{3.5}, it is easy to verify that (H2) holds, and there exists a function $F(x, t_1, t_2)$, defined by
\begin{equation}\label{3.6}
F(x,t_1,t_2)=\sum_{i=1}^2\left(\frac{1}{r_i(x)}t_i^{r_i(x)}-\frac{1}{s_i(x)}t_i^{s_i(x)}\right)+\frac12 t_1^2t_2
\quad  \text{for all} \ (x, t_1, t_2)\in \Om\times\R_+^2,
\end{equation}
such that $\nabla F(x,t_1,t_2)=(f_1(x,t_1, t_2), f_2(x, t_1, t_2))$ in $\ov{\Om}\times\R^2_+$. 
Clearly, $F$ satisfies (H3), (H5), and (H8).
Note that $\frac12 t_1^2t_2\leq (t_1^4+t_2^2)/4$. Then, from \eqref{3.6}, we see that
\begin{equation*}
F(x,t_1,t_2)\leq \sum_{i=1}^2\psi_i(t_i)\quad  \text{for all} \ (x, t_1, t_2)\in \Om\times\R_+^2,
\end{equation*}
where
\begin{equation*}
\psi_1(t_1)=\frac{1}{r_1(x)}t_1^{r_1(x)}+\frac14 t_1^4-\frac{1}{s_1(x)}t_1^{s_1(x)}\quad  \text{and}\quad 
\psi_2(t_2)=\frac{1}{r_2(x)}t_2^{r_2(x)}+\frac14 t_2^2-\frac{1}{s_1(x)}t_2^{s_2(x)}.
\end{equation*}
In view of \eqref{3.4} and \eqref{3.5}, $\psi_1$ and $\psi_2$ satisfy (H9) and (H10). We have verified that all the conditions
of Corollary \eqref{c3.1} are satisfied. Hence, the claim readily follows from Corollary \ref{c3.1}.
}
\end{example}

\setcounter{equation}{0}
\section{Proofs of the main results}\label{sec4}

Define the functionals $\Phi, \Psi, I : X\to \R$ by
\begin{equation*}
\Phi(u)=\sum_{i=1}^n\int_{\Om}\frac{w_i(x)}{p_i(x)}|\na u_i|^{p_i(x)}dx,
\end{equation*}
\begin{equation*}
\Psi(u)=\int_{\Om}F(x,u_1,\ldots,u_n)dx,
\end{equation*}
and
\begin{equation*}
I(u)=\Phi(u)-\lm\Psi(u).
\end{equation*}

\begin{lemma}\label{l4.1}
Assume that (H1) and (H2) hold. Then, we have the following:
\begin{itemize}

\item[(a)] $\Phi\in C^1(X,\R)$ with the derivative given by
\begin{equation}\label{4.1}
\la\Phi'(u), v\ra=\sum_{i=1}^n\int_{\Om}w_i(x)|\na u_i|^{p_i(x)-2}\na u_i\cdot\na v_idx
\end{equation}
for all $u=(u_1,\ldots,u_n),v=(v_1,\ldots,v_n)\in X$.

\item[(b)] $\Psi\in C^1(X,\R)$ with the derivative given by
\begin{equation}\label{4.2}
\la\Psi'(u),v\ra=\sum_{i=1}^n\int_{\Om}f_i(x,u_1,\ldots,u_n)v_idx
\end{equation}
for all $u=(u_1,\ldots,u_n),v=(v_1,\ldots,v_n)\in X$. Moreover, $\Psi$ and $\Psi'$ are sequentially weakly continuous.

\item[(c)] $I\in C^1(X,\R)$ with the derivative given by
\begin{align*}
\la I'(u), v\ra =&\la\Phi'(u), v\ra-\lm \la\Psi'(u),v\ra\\
=&\sum_{i=1}^n\int_{\Om}w_i(x)|\na u_i|^{p_i(x)-2}\na u_i\cdot\na v_idx
-\lm \sum_{i=1}^n\int_{\Om}f_i(x,u_1,\ldots,u_n)v_idx
\end{align*}
for all $u=(u_1,\ldots,u_n),v=(v_1,\ldots,v_n)\in X$.

\end{itemize}
\end{lemma}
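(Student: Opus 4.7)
The plan is to treat parts (a) and (b) independently and then deduce (c) by linearity. For the $C^1$ claim, the strategy throughout is the classical three-step argument: derive the G\^ateaux derivative via the mean value theorem and dominated convergence, verify that it is a bounded linear functional on $X$ via an appropriate H\"older estimate, and then upgrade to continuity of the derivative $X\to X^*$ (which, combined with G\^ateaux differentiability, yields Fr\'echet differentiability and $C^1$).

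For part (a), fix $u,v\in X$ and consider the map $t\mapsto\int_{\Om}\frac{w_i(x)}{p_i(x)}|\na u_i+t\na v_i|^{p_i(x)}dx$. By the mean value theorem the difference quotient in $t$ is pointwise dominated by $w_i(x)|\na u_i+s\na v_i|^{p_i(x)-1}|\na v_i|$ for some $|s|\leq|t|$. Writing this as $\bigl(w_i(x)^{(p_i(x)-1)/p_i(x)}|\na u_i+s\na v_i|^{p_i(x)-1}\bigr)\cdot\bigl(w_i(x)^{1/p_i(x)}|\na v_i|\bigr)$ and applying the H\"older inequality of Proposition~\ref{p2.1} in the unweighted spaces $L^{\hat{p}_i(x)}(\Om)$ and $L^{p_i(x)}(\Om)$ produces an integrable majorant (the first factor is in $L^{\hat p_i(x)}$ and the second in $L^{p_i(x)}$ precisely because $u_i,v_i\in W_0^{1,p_i(x)}(w_i,\Om)$). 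Lebesgue's dominated convergence then delivers formula \eqref{4.1}, and the same H\"older estimate shows $\Phi'(u)\in X^*$. For the continuity of $\Phi':X\to X^*$, one argues that if $u^k\to u$ in $X$, then $w_i(x)|\na u^k_i|^{p_i(x)-2}\na u^k_i\to w_i(x)|\na u_i|^{p_i(x)-2}\na u_i$ in the relevant dual weighted space; this Nemytski\u\i-type continuity in weighted variable exponent Lebesgue spaces is obtained by combining a.e.\ convergence of a subsequence with the equivalence of norm and modular convergence from Proposition~\ref{p2.3} (cf.\ the analogous computations in \cite{ki,kwz}).

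For part (b), the well-definedness of $\Psi$ follows from (H2) together with Proposition~\ref{p2.6}: the assumption $\theta_{ij}(x)<p^*_{j,s_j}(x)$ gives a compact embedding $W_0^{1,p_j(x)}(w_j,\Om)\hra L^{\theta_{ij}(x)}(\Om)$, and integrating the growth bound in (H2) along rays yields an $L^1$ majorant for $F(\cdot,u_1,\ldots,u_n)$. The derivative formula \eqref{4.2} then follows from $\na F=(f_1,\ldots,f_n)$ and dominated convergence applied to the difference quotients, again dominated by (H2). For the sequential weak continuity of $\Psi$ and $\Psi'$, suppose $u^k\rightharpoonup u$ in $X$. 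The compact embeddings give $u^k_j\to u_j$ strongly in $L^{\theta_{ij}(x)}(\Om)$ for every pair $i,j$, hence along a subsequence a.e.\ in $\Om$. The cross exponents in (H2) are tailored precisely so that the Nemytski\u\i\ map $(u_1,\ldots,u_n)\mapsto f_i(\cdot,u_1,\ldots,u_n)$ acts continuously from $\prod_j L^{\theta_{ij}(x)}(\Om)$ into $L^{\hat\theta_{ii}(x)}(\Om)$: raising $|t_j|^{\theta_{ij}(x)(\theta_{ii}(x)-1)/\theta_{ii}(x)}$ to the $\hat\theta_{ii}(x)$-th power gives exactly $|t_j|^{\theta_{ij}(x)}$. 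Pairing this with $v_i\in L^{\theta_{ii}(x)}(\Om)$ via Proposition~\ref{p2.1} yields $\Psi'(u^k)\to\Psi'(u)$ strongly (hence weakly) in $X^*$, and an analogous dominated convergence argument produces $\Psi(u^k)\to\Psi(u)$.

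Part (c) is immediate from (a), (b), and linearity of differentiation. The main obstacle will be the continuity of $\Phi'$ in part (a): the presence of the weights $w_i$ prevents a direct citation of standard unweighted Nemytski\u\i\ results, so one must carefully interlace modular convergence, a.e.\ extraction, and Proposition~\ref{p2.3} to conclude norm convergence in $L^{\hat p_i(x)}(w_i^{1-\hat p_i(x)},\Om)$. Once that machinery is in place, the rest of the lemma is essentially bookkeeping, since for $\Psi$ and $\Psi'$ the embeddings supplied by (H1)--(H2) are \emph{compact} and so reduce everything to strong convergence arguments.
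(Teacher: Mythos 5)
Your proposal follows essentially the same route as the paper's proof: the mean value theorem plus the H\"older splitting $w_i^{1/\hat p_i(x)}\cdot w_i^{1/p_i(x)}$ and dominated convergence for the G\^ateaux derivative of $\Phi$, a.e.\ subsequence extraction combined with the modular/norm equivalence of Proposition~\ref{p2.3} for the continuity of $\Phi'$, and the compact embeddings of Proposition~\ref{p2.6} together with the growth condition (H2) (including the observation that the cross exponents raised to the $\hat\theta_{ii}(x)$ power return $|t_j|^{\theta_{ij}(x)}$) for $\Psi$, $\Psi'$, and their sequential weak continuity. No substantive differences to report.
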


The proof of the scalar case Lemma \ref{l4.1} is contained in the proofs of \cite[Lemma 3.1]{HS2} and \cite[Proposition 2.9]{HS1}. 
Since much more delicate arguments are needed to deal with the system case, we provide a proof below.
The proof here is motivated by the work in \cite{HS1, HS2}.


\begin{proof}
We first prove part (a).
For any $u=(u_1,\ldots, u_n),v=(v_1,\ldots,v_n)\in X$, $x\in \Om$, and $i=1,\dots,n$, from the mean value theorem in several variables, 
there exists $c\in (0,1)$ such that for any $t\in\R$ with $0<|t|<1$, 
\begin{equation*}
\frac{\frac{w_i(x)}{p_i(x)}|\na  u_i+t\na v_i|^{p_i(x)}-\frac{w_i(x)}{p_i(x)}|\na u_i|^{p_i(x)}}{t}
=w_i(x)|\na u_i+ct \na v_i|^{p_i(x)-2}(\na u_i+ct \na v_i)\cdot \na v_i.
\end{equation*}
Hence,
\begin{equation}\label{4.3}
\lim_{t\to 0}\frac{\frac{w_i(x)}{p_i(x)}|\na  u_i+t\na v_i|^{p_i(x)}-\frac{w_i(x)}{p_i(x)}|\na u_i|^{p_i(x)}}{t}=
w_i(x)|\na u_i|^{p_i(x)-2}\na u_i\cdot \na v_i
\end{equation}
and
\begin{equation}\label{4.4}
\left|\frac{\frac{w_i(x)}{p_i(x)}|\na  u_i+t\na v_i|^{p_i(x)}-\frac{w_i(x)}{p_i(x)}|\na u_i|^{p_i(x)}}{t}\right|
\leq w_i(x)|(\na u_i|+|\na v_i|)^{p_i(x)-1}|\na v_i|.
\end{equation}
Note that $\left|w_i^{\frac{1}{p(x)}}|\na v_i|\right|_{L^{p(x)}}=\|v_i\|_{W_0^{1,p_i(x)}(w_i,\Om)}$.
Then, by Propositions \ref{p2.1} and \ref{p2.2}, we have
\begin{eqnarray*}
&&\int_{\Om}w_i(x)|(\na u_i|+|\na v_i|)^{p_i(x)-1}|\na v_i|dx\\
&\leq& 2\left|w_i^{\frac{1}{\hat{p}_i(x)}}(|\na u_i|+|\na v_i|)^{p_i(x)-1} \right|_{L^{\hat{p}_i(x)}(\Om)}
\left|w_i^{\frac{1}{p(x)}}|\na v_i|\right|_{L^{p_i(x)}(\Om)}\\
&\leq& 2\left[1+\left(\int_{\Om}w_i(x)|(\na u_i|+|\na v_i|)^{p_i(x)}dx\right)^{\frac{1}{\hat{p}_i^-}}\right]
\|v_i\|_{W_0^{1,p_i(x)}(w_i,\Om)}\\
&\leq& 2\left[1+2^{\frac{p_i^+-1}{\hat{p}_i^-}}\left(\int_{\Om}w_i(x)|\left(\na u_i|^{p_i(x)}+|\na v_i|^{p_i(x)}\right)dx\right)^{\frac{1}{\hat{p}_i^-}}\right]
\|v_i\|_{W_0^{1,p_i(x)}(w_i,\Om)}.
\end{eqnarray*} 
Thus, $w_i|\left(\na u_i|+|\na v_i|\right)^{p_i(\cdot)-1}|\na v_i|\in L^1(\Om)$ in view of Proposition \ref{p2.2} and the fact that $u,v\in X$.
Then, from \eqref{4.3}, \eqref{4.4}, and the Lebesgue dominated convergence theorem, we have
\begin{equation*}
\lim_{t\to 0}\int_{\Om}\frac{\frac{w_i(x)}{p_i(x)}|\na  u_i+t\na v_i|^{p_i(x)}-\frac{w_i(x)}{p_i(x)}|\na u_i|^{p_i(x)}}{t}dx=
\int_{\Om}w_i(x)|\na u_i|^{p_i(x)-2}\na u_i\cdot \na v_idx.
\end{equation*}
Hence,
\begin{eqnarray*}
&&\lim_{t\to 0}\frac{\Phi(u+tv)-\Phi(u)}{t}\\
&=&\sum_{t=0}^n\left(\lim_{t\to 0}\int_{\Om}\frac{\frac{w_i(x)}{p_i(x)}|\na  u_i+t\na v_i|^{p_i(x)}-\frac{w_i(x)}{p_i(x)}|\na u_i|^{p_i(x)}}{t}dx\right)\\
&=&\sum_{i=1}^n\int_{\Om}w_i(x)|\na u_i|^{p_i(x)-2}\na u_i\cdot \na v_idx,
\end{eqnarray*}
i.e., $\Phi$ is G\^{a}teaux differentiable and \eqref{4.1} holds.

We now show that $\Phi' : X\to X^*$ is continuous. To this end, let
$u_l=(u_{l1},\ldots,u_{ln}), u=(u_1,\ldots,u_n)\in X$ be such that $u_l\to u$ in $X$ as $l\to\infty$. Then, for $i=1,\ldots,n$, 
from Proposition \ref{p2.3}, we see that
$\lim_{l\to\infty}\int_{\Om}w_i(x)|\na u_{li}-\na u_{i}|^{p_i(x)}dx=0$. 
Then, up to a subsequence, we have
\begin{equation}\label{4.5}
\na u_{li}\to \na u_i\quad \text{a.e. in}\ \Om\ \text{as}\ l\to\infty
\end{equation}
and
\begin{equation}\label{4.6}
w_i(x)|\na u_{li}-\na u_{i}|^{p_i(x)}\leq m_i(x)\quad  \text{a.e. in}\ \Om\ \text{for some}\ m_i\in L^1(\Om).
\end{equation}
Note that
\begin{eqnarray*}
w_i(x)|\na u_{li}|^{p_i(x)}&\leq& w_i(x)\left(|\na u_i|+|\na u_{li}-\na u_i|^{p_i(x)}\right)\\
&\leq& 2^{p_i^+}w_i(x)\left(|\na u_i|^{p_i(x)}+|\na u_{li}-\na u_i|^{p_i(x)}\right).
\end{eqnarray*}
Then, from \eqref{4.6},
\begin{equation}\label{4.7}
w_i(x)|\na u_{li}|^{p_i(x)}\leq 2^{p_i^+}\left(w_i(x)|\na u_i|^{p_i(x)}+m_i(x)\right). 
\end{equation}
For any $v=(v_1,\ldots,v_n)\in X$ with $\|v\|\leq 1$, by Proposition \ref{p2.1}, it follows that 
\begin{eqnarray*}
&&|\la\Phi'(u_l)-\Phi'(u), v\ra|\\
&=&\sum_{i=1}^n\int_{\Om}w_i(x)\left(|\na u_{li}|^{p_i(x)-2}\na u_{li}
-|\na u_i|^{p_i(x)-2}\na u_i\right)\cdot\na v_idx\\
&\leq&2\sum_{i=1}^n\left|w_i^{\frac{1}{\hat{p}_i(x)}}\left||\na u_{li}|^{p_i(x)-2}\na u_{li}-|\na u_i|^{p_i(x)-2}\na u_i\right|\right|_{L^{\hat{p}_i(x)}(\Om)}
\left|w_i^{\frac{1}{p(x)}}|\na v_i|\right|_{L^{p_i(x)}(\Om)}\\
&\leq&2\sum_{i=1}^n\left|w_i^{\frac{1}{\hat{p}_i(x)}}\left||\na u_{li}|^{p_i(x)-2}\na u_{li}-|\na u_i|^{p_i(x)-2}\na u_i\right|\right|_{L^{\hat{p}_i(x)}(\Om)}.
\end{eqnarray*}
Thus,
\begin{equation}\label{4.8}
\|\Phi'(u_l)-\Phi'(u)\|_{X^*}\leq
2\sum_{i=1}^n\left|w_i^{\frac{1}{\hat{p}_i(x)}}\left||\na u_{li}|^{p_i(x)-2}\na u_{li}-|\na u_i|^{p_i(x)-2}\na u_i\right|\right|_{L^{\hat{p}_i(x)}(\Om)}.
\end{equation}
Obviously, we have
\begin{eqnarray*}
&&\int_{\Om}\left|w_i^{\frac{1}{\hat{p}_i(x)}}(x)\left||\na u_{li}|^{p_i(x)-2}\na u_{li}-|\na u_i|^{p_i(x)-2}\na u_i\right|\right|^{\hat{p}_i(x)}dx\nonumber\\
&=&\int_{\Om}w_i(x)\left||\na u_{li}|^{p_i(x)-2}\na u_{li}-|\na u_i|^{p_i(x)-2}\na u_i\right|^{\hat{p}_i(x)}dx.
\end{eqnarray*}
Note from \eqref{4.5} that
\begin{equation*}
w_i(x)\left||\na u_{li}|^{p_i(x)-2}\na u_{li}-|\na u_i|^{p_i(x)-2}\na u_i\right|^{\hat{p}_i(x)}\to 0\quad  \text{a.e. in}\ \Om\ \text{as}\ l\to\infty,
\end{equation*}
and from \eqref{4.7} that
\begin{eqnarray*}
&&w_i(x)\left||\na u_{li}|^{p_i(x)-2}\na u_{li}-|\na u_i|^{p_i(x)-2}\na u_i\right|^{\hat{p}_i(x)}\\
&\leq& 2^{\hat{p}_i^+-1}w_i(x)\left(|\na u_{li}|^{p_i(x)}+|\na u_i|^{p_i(x)}\right)\\
&\leq& 2^{\hat{p}_i^++p_i^+-1}\left(w_i(x)|\na u_{i}|^{p_i(x)}+m_i(x)\right).
\end{eqnarray*}
Since $2^{\hat{p}_i^++p_i^+-1}\left(w_i(\cdot)|\na u_{i}|^{p_i(\cdot)}+m_i(\cdot)\right)\in L^1(\Om)$, from 
the Lebesgue dominated convergence theorem, it follows that
\begin{eqnarray*}
\lim_{l\to\infty}\int_{\Om}\left|w_i^{\frac{1}{\hat{p}_i(x)}}(x)\left||\na u_{li}|^{p_i(x)-2}\na u_{li}
-|\na u_i|^{p_i(x)-2}\na u_i\right|\right|^{\hat{p}_i(x)}dx=0.
\end{eqnarray*}
This, together with Proposition \ref{p2.3}, implies that
\begin{equation*}
\lim_{l\to\infty}\left|w_i^{\frac{1}{\hat{p}_i(x)}}\left||\na u_{li}|^{p_i(x)-2}\na u_{li}-|\na u_i|^{p_i(x)-2}\na u_i\right|\right|_{L^{\hat{p}_i(x)}(\Om)}=0.
\end{equation*}
Thus, from \eqref{4.8}, we have
\begin{equation*}
\|\Phi'(u_l)-\Phi'(u)\|_{X^*}=0.
\end{equation*}
Hence, $\Phi' : X\to X^*$ is continuous, and so $\Phi\in C^1(X,\R)$.
This proves part (a).

Next, we show part (b).
Let $u=(u_1,\ldots, u_n),v=(v_1,\ldots,v_n)\in X$, $x\in \Om$, and $t\in\R$.
Then,
\begin{eqnarray}\label{4.9}
&&\frac{F(x,u_1+tv_1,\ldots,u_n+tv_n)-F(x,u_1,\ldots,u_n)}{t}\nonumber\\
&=&\sum_{i=1}^n\int_0^1f_i(x,u_1+stv_1,\ldots,u_n+stv_n)v_ids.
\end{eqnarray}
From (H2), we see that
\begin{eqnarray}\label{4.10}
&&\sum_{i=1}^n\int_0^1f_i(x,u_1+stv_1,\ldots,u_n+stv_n)v_ids\nonumber\\
&\leq& \sum_{i=1}^n\int_0^1\left[|h_i(x)| |v_i|+C\left(|u_i+stv_i|^{\theta_{ii}(x)-1}|v_i|
+\sum_{\substack{j\neq i, j=1}}^n|u_j+stv_j|^{\frac{\theta_{ij}(x)(\theta_{ii}(x)-1)}{\theta_{ii}(x)}}|v_i|\right)\right]ds\nonumber\\
&\leq& K(x,u,v),
\end{eqnarray}
where
\begin{equation*}
K(x,u,v)=\sum_{i=1}^n\left[|h_i(x)||v_i|+C\left(|u_i+v_i|^{\theta_{ii}(x)-1}|v_i|
+\sum_{\substack{j\neq i, j=1}}^n|u_j+v_j|^{\frac{\theta_{ij}(x)(\theta_{ii}(x)-1)}{\theta_{ii}(x)}}|v_i|\right)\right].
\end{equation*}
Propositions \ref{p2.1} and \ref{p2.4} imply that
\begin{equation*}
\int_{\Om}|h_i(x)||v_i|dx\leq 2|h_i|_{L^{\hat{\theta}_{ii}(x)}(\Om)}|v_i|_{L^{\theta_{ii}(x)}(\Om)}
\end{equation*}
and
\begin{eqnarray*}
\int_{\Om}|u_i+v_i|^{\theta_{ii}(x)-1}|v_i|dx&\leq& 2\left|(|u_i|+|v_i|)^{\ta_{ii}(x)-1}\right|_{L^{\hat{\ta}_{ii}(x)}(\Om)}|v_i|_{L^{\ta_{ii}(x)}(\Om)}\\
&\leq& 2\left[1+\left(|u_i|_{L^{\ta_{ii}(x)}(\Om)}+|v_i|_{L^{\ta_{ii}(x)}(\Om)}\right)^{\frac{\ta_{ii}^+}{\hat{\ta}_{ii}^-}}\right]|v_i|_{L^{\ta_{ii}(x)}(\Om)},
\end{eqnarray*}
and by Young's inequality and Proposition \ref{p2.2}, we have
\begin{eqnarray*}
\int_{\Om}|u_j+v_j|^{\frac{\theta_{ij}(x)(\theta_{ii}(x)-1)}{\theta_{ii}(x)}}|v_i|dx
&\leq& \int_{\Om}\left[\frac{\theta_{ii}(x)-1}{\ta_{ii}(x)}(|u_j|+|v_j|)^{\ta_{ij}(x)}+\frac{1}{\ta_{ii}(x)}|v_i|^{\ta_{ii}(x)}\right]dx\\
&\leq& 2^{\ta_{ii}^+}\int_{\Om}\left(|u_j|^{\ta_{ij}(x)}+|v_j|^{\ta_{ij}(x)}+|v_i|^{\ta_{ii}(x)}\right)dx\\
&\leq&2^{\ta_{ii}^+}\left(|u_j|^{\ta_{ij}^+}_{L^{\ta_{ij}(x)}(\Om)}+|v_j|^{\ta_{ij}^+}_{L^{\ta_{ij}(x)}(\Om)}+|v_i|^{\ta_{ii}^+}_{L^{\ta_{ii}(x)}(\Om)}+3\right).
\end{eqnarray*}
Then, from Proposition \ref{p2.6} and the above estimates, we see that $K(\cdot,u(\cdot),v(\cdot))\in L^1(\Om)$.
Hence, in view of \eqref{4.9} and \eqref{4.10}, from the Lebesgue dominated convergence theorem, it follows that
\begin{eqnarray*}
&&\lim_{t\to 0}\frac{\Psi(u+tv)-\Psi(u)}{t}\\
&=&\lim_{t\to 0}\int_{\Om}\frac{F(x,u_1+tv_1,\ldots,u_n+tv_n)-F(x,u_1,\ldots,u_n)}{t}dx\\
&=&\sum_{i=1}^n\int_{\Om}f_i(x,u_1,\ldots,u_n)v_idx,
\end{eqnarray*} 
i.e., $\Psi$ is G\^{a}teaux differentiable and \eqref{4.2} holds.

We now show that $\Psi' : X\to X^*$ is continuous. To this end, let
$u_l=(u_{l1},\ldots,u_{ln}), u=(u_1,\ldots,u_n)\in X$ be such that $u_l\to u$ in $X$ as $l\to\infty$. Then, for $i, j=1,\ldots,n$, 
from Proposition \ref{p2.6}, we see that $u_{lj}\to u_j$ in $L^{\ta_{ij}(x)}(\Om)$ as $l\to\infty$. Then, up to a subsequence, we obtain that
\begin{equation}\label{4.11}
u_{lj}\to u_j\quad  \text{a.e in}\ \Om\ \text{as}\ l\to\infty 
\end{equation}
and
\begin{equation}\label{4.12}
|u_{lj}(x)|^{\ta_{ij}(x)}\leq k_j(x)\quad  \text{a.e. in}\ \Om \ \text{for some}\ k_j\in L^1(\Om).
\end{equation}
For all $v=(v_1,\ldots,v_n)\in X$ with $\|v\|\leq 1$, from Propositions \ref{p2.1} and \ref{p2.6}, we see that
\begin{eqnarray*}
&&|\la\Psi'(u_l)-\Psi'(u), v\rangle|\\
&\leq& \sum_{i=1}^n\int_{\Om}\left|f_i(x,u_{l1},\ldots,u_{ln})-f_i(x,u_1,\ldots,u_n)\right| |v_i|dx\\
&\leq&2\sum_{i=1}^n\left|f_i(\cdot,u_{l1}(\cdot),\ldots,u_{ln}(\cdot))-f_i(\cdot,u_1(\cdot),\ldots,u_n(\cdot))\right|_{L^{\hat{\ta}_{ii}(x)}(\Om)}|v_i|_{L^{{\ta}_{ii}(x)}(\Om)}\\
&\leq&2C_{\ta_{ii}}\sum_{i=1}^n\left|f_i(\cdot,u_{l1}(\cdot),\ldots,u_{ln}(\cdot))-f_i(\cdot,u_1(\cdot),\ldots,u_n(\cdot))\right|_{L^{\hat{\ta}_{ii}(x)}(\Om)},
\end{eqnarray*}
where $C_{\ta_{ii}}>0$ is the embedding constant of the compact embedding $W_0^{1,p_i(x)}(w_i,\Om)\hra\hra L^{\ta_{ii}(x)}(\Om)$. Hence,
\begin{eqnarray}\label{4.13}
&&\|\Psi'(u_l)-\Psi'(u)\|_{X^*}\nonumber\\
&\leq& 2C_{\ta_{ii}}\sum_{i=1}^n\left|f_i(\cdot,u_{l1}(\cdot),\ldots,u_{ln}(\cdot))-f_i(\cdot,u_1(\cdot),\ldots,u_n(\cdot))\right|_{L^{\hat{\ta}_{ii}(x)}(\Om)}.
\end{eqnarray}
For $i=1,\ldots,n$, from (H1) and \eqref{4.12}, we have
\begin{eqnarray*}
&&\left|f_i(x,u_{l1},\ldots,u_{ln})-f_i(x,u_1,\ldots,u_n)\right|^{\hat{\ta}_{ii}(x)}\\
&\leq& 2^{\hat{\ta}_{ii}^+}\left(|f_i(x,u_{l1},\ldots,u_{ln})|^{\hat{\ta}_{ii}(x)}+|f_i(x,u_1,\ldots,u_n)|^{\hat{\ta}_{ii}(x)}\right)\\
&\leq&2^{2\hat{\ta}_{ii}^+}\left[|h_i(x)|^{\hat{\ta}_{ii}(x)}+C^{\hat{\ta}_{ii}(x)}\left(|u_{li}|^{\theta_{ii}(x)-1}
+\sum_{j\neq i, j=1}^{n}|u_{lj}|^{\frac{\ta_{ij}(x)(\ta_{ii}(x)-1)}{\ta_{ii}(x)}}\right)^{\hat{\ta}_{ii}(x)}\right]\\
&&+2^{2\hat{\ta}_{ii}^+}\left[|h_i(x)|^{\hat{\ta}_{ii}(x)}+C^{\hat{\ta}_{ii}(x)}\left(|u_{i}|^{\theta_{ii}(x)-1}
+\sum_{j\neq i, j=1}^{n}|u_{j}|^{\frac{\ta_{ij}(x)(\ta_{ii}(x)-1)}{\ta_{ii}(x)}}\right)^{\hat{\ta}_{ii}(x)}\right]\\
&\leq& D_1\left(|h_i(x)|^{\hat{\ta}_{ii}(x)}+\sum_{j=1}^{n}|u_{lj}|^{\ta_{ij}(x)}+\sum_{j=1}^{n}|u_{j}|^{\ta_{ij}(x)}\right)\\
&\leq& D_1\left(|h_i(x)|^{\hat{\ta}_{ii}(x)}+\sum_{j=1}^{n}|k_j(x)|^{\ta_{ij}(x)}+\sum_{j=1}^{n}|u_{j}|^{\ta_{ij}(x)}\right):=L(x,u),
\end{eqnarray*}
where $D_1>0$ is some appropriate constant. Clearly, $L(\cdot,u(\cdot))\in L^1(\Om)$. Then, from the Lebesgue dominated convergence theorem and \eqref{4.11}, it follows that
\begin{eqnarray*}
\lim_{l\to\infty}\left|f_i(x,u_{l1}(x),\ldots,u_{ln}(x))-f_i(x,u_1(x),\ldots,u_n(x))\right|^{\hat{\ta}_{ii}(x)} dx=0.
\end{eqnarray*}
Then, by Proposition \ref{p2.3}, we have
\begin{equation*}
\lim_{l\to\infty}\left|f_i(\cdot,u_{l1}(\cdot),\ldots,u_{ln}(\cdot))-f_i(\cdot,u_1(\cdot),\ldots,u_n(\cdot))\right|_{L^{\hat{\ta}_{ii}(x)}(\Om)}=0.
\end{equation*}
Consequently,
\begin{equation*}
\lim_{l\to\infty}\sum_{i=1}^n\left|f_i(\cdot,u_{l1}(\cdot),\ldots,u_{ln}(\cdot))-f_i(\cdot,u_1(\cdot),\ldots,u_n(\cdot))\right|_{L^{\hat{\ta}_{ii}(x)}(\Om)}=0.
\end{equation*}
This, together with \eqref{4.13}, implies that
\begin{equation*}
\lim_{l\to\infty}\|\Psi'(u_l)-\Psi'(u)\|_{X^*}=0.
\end{equation*}
Hence, $\Psi' : X\to X^*$ is continuous, and so $\Psi\in C^1(X,\R)$.

Next, we show that $\Psi$ is sequentially weak continuous.
Let
$u_l=(u_{l1},\ldots,u_{ln}), u=(u_1,\ldots,u_n)\in X$ be such that $u_l\rhp u$ in $X$ as $l\to\infty$. Then, for $i, j=1,\ldots,n$, 
from Proposition \ref{p2.6}, we see that $u_{lj}\to u_j$ in $L^{\ta_{ij}(x)}(\Om)$ as $l\to\infty$. Then, 
in view of Proposition \ref{p2.3} with $w\equiv 1$, up to a subsequence, we have
\begin{equation}\label{4.14}
\left\{\begin{array}{l}
u_{lj}\to u_j\quad  \text{a.e in}\ \Om\ \text{as}\ l\to\infty, \\\noalign{\medskip}
|u_{lj}(x)-u_j|^{\ta_{ij}(x)}\leq g_j(x)\quad  \text{a.e. in}\ \Om \ \text{for some}\ g_j\in L^1(\Om).
\end{array}
\right.
\end{equation}
Then, $F(x,u_{l1}(x),\ldots,u_{ln}(x))\to F(x,u_{1}(x),\ldots,u_{n}(x))$ a.e. in $\Omega$ as $l\to\infty$.
From (H1), it follows that
\begin{eqnarray*}
&&|F(x,u_{l1},\ldots,u_{ln})|\\
&\leq &\left|F(x,0,\ldots,0)+\sum_{i=1}^n\int_0^1f_i(x,su_{l1},\ldots,su_{ln})u_{li}ds\right|\\
&\leq&|F(x,0,\ldots,0)|\\
&&+\sum_{i=1}^n\int_0^1\left[|h_i(x)| |u_{li}|+C\left(|su_{li}|^{\theta_{ii}(x)-1}|u_{li}|
+\sum_{\substack{j\neq i, j=1}}^n|su_{lj}|^{\frac{\theta_{ij}(x)(\theta_{ii}(x)-1)}{\theta_{ii}(x)}}|u_{li}|\right)\right]ds\\
&\leq&|F(x,0,\ldots,0)|+\sum_{i=1}^n\left[|h_i(x)| |u_{li}|+C\left(|u_{li}|^{\theta_{ii}(x)}
+\sum_{\substack{j\neq i, j=1}}^n|u_{lj}|^{\frac{\theta_{ij}(x)(\theta_{ii}(x)-1)}{\theta_{ii}(x)}}|u_{li}|\right)\right].
\end{eqnarray*}
Then, by Young's inequality and \eqref{4.14}, we see that
\begin{eqnarray*}
&&|F(x,u_{l1},\ldots,u_{ln})|\\
&\leq& |F(x,0,\ldots,0)|\\
&&+\sum_{i=1}^n\left[|h_i(x)| |u_{li}|+C\left(|u_{li}|^{\theta_{ii}(x)}
+\sum_{\substack{j\neq i, j=1}}^n\left[\frac{\theta_{ii}(x)-1}{\ta_{ii}(x)}|u_{lj}|^{\ta_{ij}(x)}+\frac{1}{\ta_{ii}(x)}|u_{li}|^{\ta_{ii}(x)}\right]\right)\right].\\
&\leq& |F(x,0,\ldots,0)|+D_2\sum_{i=1}^n\left(|h_i(x)| |u_{li}|+\sum_{j=1}^n|u_{lj}|^{\theta_{ij}(x)}\right).\\
&\leq& |F(x,0,\ldots,0)|+D_2\sum_{i=1}^n\left[|h_i(x)| (|u_{li}-u_i|+|u_i|)+\sum_{j=1}^n(|u_{lj}-u_j|+|u_j|)^{\theta_{ij}(x)}\right]\\
&\leq& |F(x,0,\ldots,0)|+D_2\sum_{i=1}^n\left[|h_i(x)| \left((g_{i}(x))^{\frac{1}{\ta_{ii}(x)}}+|u_i|\right)
+\sum_{j=1}^n\left((g_j(x))^{\frac{1}{\ta_{ij}(x)}}+|u_j|\right)^{\theta_{ij}(x)}\right].\\
&\leq& M(x,u),
\end{eqnarray*}
where $D_2>0$ is some appropriate constant and 
\begin{eqnarray*}
M(x,u)= |F(x,0,\ldots,0)|+D_2\sum_{i=1}^n\left[|h_i(x)| \left((g_i(x))^{\frac{1}{\ta_{ii}(x)}}+|u_i|\right)
+\sum_{j=1}^n2^{\ta_{ij}^+}\left(g_j(x)+|u_j|^{{\ta_{ij}(x)}}\right)\right].
\end{eqnarray*}
By Propositions \ref{p2.1}, we obtain that
\begin{equation*}
\int_{\Om}|h_i(x)|(g_i(x))^{\frac{1}{\ta_{ii}(x)}}dx\leq 2|h_i|_{L^{\hat{\theta}_{ii}(x)}(\Om)}|g_i|_{L^{1}(\Om)}
\end{equation*}
and
\begin{equation*}
\int_{\Om}|h_i(x)||u_i|dx\leq 2|h_i|_{L^{\hat{\theta}_{ii}(x)}(\Om)}|u_i|_{L^{\theta_{ii}(x)}(\Om)}.
\end{equation*}
Then, in view of Proposition \ref{p2.6} and the above estimates, we see that $M(\cdot,u(\cdot))\in L^1(\Om)$.
Thus, by the Lebesgue dominated convergence theorem, we have
\begin{equation*}
\lim_{l\to\infty}\int_{\Om}F(x,u_{l1},\ldots,u_{ln})=\int_{\Om}F(x,u_{1},\ldots,u_{n}),
\end{equation*}
i.e., $\lim_{l\to\infty}\Psi(u_l)=\Psi(u)$.
Hence, $\Psi$ is sequentially weakly continuous.
By a similar argument, we can sow the sequentially weak continuity of $\Psi'$. The details are omitted here.

Finally, part (c) readily follows from parts (a) and (b).
The completes the proof of the lemma.
\end{proof}

\begin{lemma}\label{l4.2}
Assume that (H1) holds. Then, the operator $\Phi'(u): X\to X^*$ is of type $(S_+)$, i.e., if
$u_l=(u_{l1},\ldots,u_{ln}), u=(u_1,\ldots,u_n)\in X$ are such that $u_l\rhp u$ in $X$ and
$\lim\sup_{l\to\infty}\la\Phi'(u_l), u_l-u\ra\leq 0$, then $u_l\to u$ in $X$.
\end{lemma}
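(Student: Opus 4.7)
The plan is to follow the standard monotonicity scheme for the $p(x)$-Laplacian, adapted componentwise to the system with weights. Suppose $u_l\rhp u$ in $X$ and $\limsup_{l\to\infty}\la\Phi'(u_l),u_l-u\ra\le 0$. Since $\Phi'\in C^1(X,X^*)$ (Lemma~\ref{l4.1}(a)), $\Phi'(u)\in X^*$ is fixed, so the weak convergence $u_l\rhp u$ gives $\la\Phi'(u),u_l-u\ra\to 0$. Subtracting, we obtain
\begin{equation*}
\limsup_{l\to\infty}\la\Phi'(u_l)-\Phi'(u),u_l-u\ra\le 0.
\end{equation*}
Using formula \eqref{4.1}, this quantity equals $\sum_{i=1}^n J_{l,i}$, where
\begin{equation*}
J_{l,i}=\int_{\Om}w_i(x)\bigl(|\na u_{li}|^{p_i(x)-2}\na u_{li}-|\na u_i|^{p_i(x)-2}\na u_i\bigr)\cdot(\na u_{li}-\na u_i)\,dx.
\end{equation*}
The pointwise monotonicity of $\xi\mapsto|\xi|^{p-2}\xi$ for $p>1$ (together with $w_i>0$ a.e.) makes each integrand nonnegative; hence $J_{l,i}\ge 0$ for every $l,i$, and the lim sup above forces $J_{l,i}\to 0$ as $l\to\infty$ for every $i=1,\ldots,n$.

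The next step is to convert $J_{l,i}\to 0$ into modular convergence $\rho_i(\na u_{li}-\na u_i):=\int_{\Om}w_i(x)|\na u_{li}-\na u_i|^{p_i(x)}dx\to 0$, via Simon-type pointwise inequalities applied separately on $\Om_i^+:=\{x\in\Om:p_i(x)\ge 2\}$ and $\Om_i^-:=\{x\in\Om:p_i(x)<2\}$. On $\Om_i^+$ the inequality $(|\xi|^{p-2}\xi-|\eta|^{p-2}\eta)\cdot(\xi-\eta)\ge 2^{2-p}|\xi-\eta|^p$ immediately yields
\begin{equation*}
\int_{\Om_i^+}w_i(x)|\na u_{li}-\na u_i|^{p_i(x)}dx\le 2^{p_i^+-2}J_{l,i}\to 0.
\end{equation*}
On $\Om_i^-$ one uses $(|\xi|^{p-2}\xi-|\eta|^{p-2}\eta)\cdot(\xi-\eta)\ge(p-1)|\xi-\eta|^2(|\xi|+|\eta|)^{p-2}$, writes
\begin{equation*}
w_i|\na u_{li}-\na u_i|^{p_i(x)}=\Bigl(w_i\tfrac{|\na u_{li}-\na u_i|^2}{(|\na u_{li}|+|\na u_i|)^{2-p_i(x)}}\Bigr)^{\!p_i(x)/2}\bigl(w_i(|\na u_{li}|+|\na u_i|)^{p_i(x)}\bigr)^{\!(2-p_i(x))/2},
\end{equation*}
and applies the H\"older inequality for variable exponents (Proposition~\ref{p2.1}) with the pair $\bigl(2/p_i(x),\,2/(2-p_i(x))\bigr)$. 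The first factor is controlled by $J_{l,i}$ and the second by the modular of $|\na u_{li}|+|\na u_i|$, which stays bounded since $\|u_l\|$ is bounded. Therefore $\int_{\Om_i^-}w_i|\na u_{li}-\na u_i|^{p_i(x)}dx\to 0$ as well.

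Combining the two pieces gives $\rho_i(\na u_{li}-\na u_i)\to 0$, so Proposition~\ref{p2.3} yields $|\na u_{li}-\na u_i|_{L^{p_i(x)}(w_i,\Om)}\to 0$, i.e.\ $\|u_{li}-u_i\|_{W_0^{1,p_i(x)}(w_i,\Om)}\to 0$. Summing over $i$ delivers $\|u_l-u\|\to 0$ in $X$. The main technical obstacle I anticipate is the treatment on $\Om_i^-$: the variable exponents force a careful use of H\"older (in Luxemburg form) to exchange the power $p_i(x)/2\in(1/2,1)$ with the weighted modular bound, and one must verify that the bound from $J_{l,i}$ really does suffice when combined with uniform boundedness of $\rho_i(\na u_{li})+\rho_i(\na u_i)$.
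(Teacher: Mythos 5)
Your proof is essentially correct, but note that the paper does not actually prove Lemma~\ref{l4.2} at all: it only remarks that the scalar case follows from \cite[Theorem 4.1]{le} and \cite[Lemma 3.2]{HS2} and that the system case is ``similar.'' Your proposal therefore supplies a genuinely self-contained argument where the paper offers a citation. The skeleton is sound: weak convergence of $u_l$ gives $\la\Phi'(u),u_l-u\ra\to 0$, hence $\limsup_l\sum_i J_{l,i}\le 0$ with each $J_{l,i}\ge 0$ by monotonicity of $\xi\mapsto|\xi|^{p-2}\xi$ and positivity of $w_i$, so $J_{l,i}\to 0$; the Simon inequalities then convert this into modular convergence, and Proposition~\ref{p2.3} finishes. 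The one step that needs repair is the H\"older step on $\Om_i^-=\{p_i(x)<2\}$: the conjugate exponent $2/(2-p_i(x))$ need not be bounded there (it blows up wherever $p_i(x)\to 2$), so $2/(2-p_i)$ is not an element of $C_+(\ov{\Om_i^-})$ and Propositions~\ref{p2.1} and~\ref{p2.2} as stated in the paper do not literally apply. Two fixes are available: (i) invoke the H\"older inequality for general, possibly unbounded, exponents from \cite{DHHR}, observing that the norm--modular comparison you need for the second factor only uses the finite lower bound $2/(2-p_i^-)>1$; or, more elementarily, (ii) replace H\"older by the pointwise Young inequality $a^{\theta}b^{1-\theta}\le \de^{-1}a+\de^{\theta/(1-\theta)}b$ with $\theta=p_i(x)/2$, which yields $\int_{\Om_i^-}w_i|\na u_{li}-\na u_i|^{p_i(x)}dx\le \de^{-1}(p_i^--1)^{-1}J_{l,i}+\de^{p_i^-/(2-p_i^-)}M$ (with $M$ a bound on the modulars of $|\na u_{li}|+|\na u_i|$, available since weakly convergent sequences are bounded), and one sends $l\to\infty$ first and $\de\to 0$ second. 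With that adjustment, and the standard convention that the integrand is taken to vanish on the set where $\na u_{li}=\na u_i=0$, your argument is complete and gives exactly the componentwise $(S_+)$ property that sums to convergence in $X$.
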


The scalar case of Lemma \ref{l4.2} follows from \cite[Theorem 4.1]{le}; see also \cite[Lemma 3.2]{HS2}.
The general case can be proved using an argument similar to that of \cite[Theorem 4.1]{le}.

\begin{remark}\label{r4.1}
{\rm
Under the conditions (H1) and (H2), we have the following observations:

\begin{itemize}

\item[(a)] In view of Lemmas \ref{l4.1} (b) and \ref{l4.2}, $I'=\Phi'-\lm\Psi'$ is the sum of a $(S_+)$ operator and
a sequentially weakly continuous operator. Hence, $I'$ is of type $(S_+)$. 


\item[(b)] $\Phi$ is weakly lower semicontinuous since it is convex. By Lemma \ref{l4.1} (b), $\Psi$ is sequentially weakly
continuous. Thus, $I=\Phi-\lm\Psi$ is weakly lower semicontinuous.

\end{itemize}

}
\end{remark}

\begin{remark}\label{r4.2}
{\rm 
By Definition \ref{d2.1} and Lemma \ref{l4.1} (c), we see that any nontrivial critical points of $I$ are nontrivial weak solutions of system \eqref{1.1}.
}
\end{remark}

Lemma \ref{l4.4} below can be found in \cite{Z}.

\begin{lemma}\label{l4.3}
Let $X$ be a real reflexive Banach space, and let $J$ be a weakly lower semicontinuous functional such that
$
\lim_{\|u\|\to\infty}J(u)=\infty.
$
Then, there exists $u_0\in X$ such that
$
J(u_0)=\inf_{u\in X}J(u).
$ 
 Furthermore, if $J\in C^1(X,R)$, then $J'(u_0)=0$.
\end{lemma}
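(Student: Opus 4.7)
The plan is to apply the direct method of the calculus of variations in its textbook form. First I would verify that the infimum $m:=\inf_{u\in X}J(u)$ is finite. The only potential issue is that $m=-\infty$, so I would take any sequence $\{v_l\}\subset X$ with $J(v_l)\to m$; coercivity forces $\{v_l\}$ to be bounded in norm (otherwise, along a subsequence $\|v_l\|\to\infty$ would give $J(v_l)\to\infty$, contradicting $J(v_l)\to m$ if $m<\infty$, and in the case $m=-\infty$ we can just replace $\{v_l\}$ by a sequence along which $J(v_l)\to-\infty$ and derive a contradiction the same way after extracting a weak limit). Since $X$ is reflexive, a bounded sequence admits a weakly convergent subsequence $v_{l_k}\rhp v_*$, and weak lower semicontinuity yields $J(v_*)\le\liminf_k J(v_{l_k})$, which rules out $m=-\infty$.

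With $m\in\R$ in hand, I would now pick a genuine minimizing sequence $\{u_l\}\subset X$, i.e. $J(u_l)\to m$. By the same coercivity argument as above, $\{u_l\}$ is norm-bounded, so by reflexivity of $X$ there exist a subsequence (still denoted $\{u_l\}$) and $u_0\in X$ such that $u_l\rhp u_0$ in $X$. Weak lower semicontinuity of $J$ then gives
\begin{equation*}
J(u_0)\le\liminf_{l\to\infty}J(u_l)=m=\inf_{u\in X}J(u),
\end{equation*}
and the reverse inequality $J(u_0)\ge m$ is automatic from the definition of the infimum. Hence $J(u_0)=m$, establishing the first conclusion.

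For the second conclusion, assume in addition that $J\in C^1(X,\R)$. For each fixed $v\in X$, consider the real-valued function $\varphi(t):=J(u_0+tv)$, $t\in\R$. By the chain rule $\varphi$ is differentiable with $\varphi'(0)=\la J'(u_0),v\ra$, and since $u_0$ is a global minimizer of $J$, the point $t=0$ is a global minimizer of $\varphi$, so $\varphi'(0)=0$. Because $v\in X$ was arbitrary, $J'(u_0)=0$ in $X^*$, which is the desired conclusion.

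I do not expect any genuine obstacle here: the argument is the classical Weierstrass-type existence theorem in reflexive Banach spaces, and the only mild point of care is making sure that coercivity is used both to bound the minimizing sequence and to exclude $m=-\infty$ before invoking reflexivity and weak lower semicontinuity.
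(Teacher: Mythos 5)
Your proof is correct and complete: it is the standard direct-method argument (coercivity gives a bounded minimizing sequence, reflexivity gives a weakly convergent subsequence, weak lower semicontinuity identifies the weak limit as a minimizer, and a global minimizer of a $C^1$ functional is a critical point). The paper itself offers no proof of this lemma — it simply cites Zeidler's monograph — so there is nothing to contrast with; your write-up, including the preliminary step ruling out $\inf_{u\in X}J(u)=-\infty$, is exactly the argument the cited reference supplies.
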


Recall that a functional $I\in C^1(X,\R)$ is said to satisfy the
Palais--Smale (PS, for short) condition if every sequence $\{u_n\}\subset X$, such that $I(u_n)$ is bounded
and $I'(u_n)\to 0$ as $n\to\infty$, has a convergent subsequence. 
The sequence $\{u_n\}$ is called a PS sequence of $I$.
We now state the following classic mountain pass lemma of Ambrosetti 
and Rabinowitz (see, for example, \cite[Theorem 7.1]{Jabri}).
Below, we denote by $B_r(u)$ the open ball centered at $u \in X$
with radius $r > 0$, $\ov{B}_r(u)$ its closure, and $\partial B_r(u)$ its boundary.

\begin{lemma}\label{l4.4}
Let $(X, \|\cdot\|)$ be a real Banach space and $I\in C^1(X,\R)$. 
Assume that $I$ satisfies the PS condition and there exist $u_0, u_1\in X$ and $\rho>0$ such that
\begin{itemize}

\item[(A1)] $u_1\not\in\ov{B}_{\rho}(u_0)$;

\item[(A2)] $\max\{I(u_0), I(u_1)\}<\inf_{u\in\partial B_{\rho}(u_0)}I(u)$.

\end{itemize}
Then, I possesses a critical value which can be characterized as
$$
c=\inf_{\ga\in\Ga}\max_{s\in [0,1]}I(\ga(s))\geq\inf_{u\in\partial B_{\rho}(u_0)}I(u),
$$
where
$$
\Ga=\left\{\ga\in C([0,1],X)\ :\ \ga(0)=u_0,\ \ga(1)=u_1\right\}.
$$
\end{lemma}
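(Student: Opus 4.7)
The plan is the classical minimax/deformation argument. I first verify that
$$
c=\inf_{\ga\in\Ga}\max_{s\in[0,1]}I(\ga(s))
$$
is a well-defined real number satisfying $c\geq\inf_{u\in\partial B_{\rho}(u_0)}I(u)$. Finiteness is clear: the affine path $\ga_0(s)=(1-s)u_0+su_1$ lies in $\Ga$ and $I\circ\ga_0$ is continuous on $[0,1]$, so its maximum is finite. For the lower bound, (A1) gives $\|u_1-u_0\|>\rho$, so for any $\ga\in\Ga$ the continuous function $s\mapsto\|\ga(s)-u_0\|$ passes from $0$ to a value exceeding $\rho$ and, by the intermediate value theorem, must hit the sphere $\partial B_{\rho}(u_0)$ at some $s^{\ast}\in(0,1)$. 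Hence $\max_{s}I(\ga(s))\geq I(\ga(s^{\ast}))\geq\inf_{\partial B_{\rho}(u_0)}I$; taking the infimum over $\ga$ yields the claimed inequality, and combined with (A2) it forces $c>\max\{I(u_0),I(u_1)\}$.

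Next I would prove that $c$ is a critical value by contradiction. Suppose the critical set $K_c:=\{u\in X:I'(u)=0,\ I(u)=c\}$ is empty. The PS condition then yields constants $\bar\varepsilon,\delta>0$ such that $\|I'(u)\|_{X^{\ast}}\geq\delta$ on the strip $\{u:|I(u)-c|\leq\bar\varepsilon\}$; otherwise one could extract a PS sequence at level $c$ whose convergent subsequence would lie in $K_c$. Choosing
$$
0<\varepsilon<\min\!\left\{\tfrac{\bar\varepsilon}{2},\ \tfrac{c-\max\{I(u_0),I(u_1)\}}{2}\right\},
$$
the quantitative deformation lemma furnishes a continuous $\eta:X\to X$ with (i) $\eta(u)=u$ whenever $|I(u)-c|\geq 2\varepsilon$, and (ii) $I(\eta(u))\leq c-\varepsilon$ whenever $I(u)\leq c+\varepsilon$. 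By the definition of $c$, pick $\ga\in\Ga$ with $\max_{s}I(\ga(s))\leq c+\varepsilon$. Since the choice of $\varepsilon$ guarantees $I(u_j)\leq c-2\varepsilon$ for $j=0,1$, property (i) gives $\eta(u_0)=u_0$ and $\eta(u_1)=u_1$, whence $\tilde\ga:=\eta\circ\ga\in\Ga$. But (ii) yields $\max_{s}I(\tilde\ga(s))\leq c-\varepsilon<c$, contradicting the definition of $c$. Hence $K_c\neq\emptyset$.

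The main obstacle is the construction of the deformation $\eta$. This is achieved via a pseudo-gradient flow: on $\{u:I'(u)\neq 0\}$ there exists a locally Lipschitz vector field $V$ with $\|V(u)\|\leq 2\|I'(u)\|_{X^{\ast}}$ and $\langle I'(u),V(u)\rangle\geq\|I'(u)\|_{X^{\ast}}^{2}$. Choosing a Lipschitz cutoff $\chi$ equal to $1$ on $\{|I-c|\leq\varepsilon\}$ and vanishing outside $\{|I-c|\leq 2\varepsilon\}$, one solves the ODE $\dot\sigma(t,u)=-\chi(\sigma)V(\sigma)/\|V(\sigma)\|^{2}$. Along the support of $\chi$ one has $\frac{d}{dt}I(\sigma(t,u))\leq -\tfrac14$, so for a flow time $T$ proportional to $\varepsilon$ the map $\eta:=\sigma(T,\cdot)$ realizes (i) and (ii). The PS condition is used precisely to secure the uniform lower bound on $\|I'\|_{X^{\ast}}$ needed for $V$, and hence for $\sigma$, to be well-defined on the full strip.
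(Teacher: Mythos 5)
Your argument is correct, but note that the paper does not actually prove this lemma: it is the classical Ambrosetti--Rabinowitz mountain pass theorem, which the author simply quotes with a citation to \cite[Theorem 7.1]{Jabri}. What you have written is a faithful reconstruction of the standard proof: the intermediate value theorem forces every admissible path to meet $\partial B_{\rho}(u_0)$, giving $c\geq\inf_{\partial B_{\rho}(u_0)}I>\max\{I(u_0),I(u_1)\}$; the PS condition converts the assumed emptiness of $K_c$ into a uniform lower bound on $\|I'\|_{X^{*}}$ in a strip around level $c$; and the quantitative deformation lemma then pushes an almost-optimal path below level $c$ while fixing the endpoints, yielding the contradiction. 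Your bookkeeping with $\varepsilon$ (in particular $\max\{I(u_0),I(u_1)\}<c-2\varepsilon$, so the deformation fixes $u_0$ and $u_1$) is right. The only part you state rather than prove is the existence of a locally Lipschitz pseudo-gradient field for the merely continuous map $I'$, which requires a partition-of-unity construction, and the global-in-time solvability of the truncated flow; both are standard and your sketch identifies correctly where the PS condition enters. Since the paper treats the lemma as a black box, your proposal supplies strictly more detail than the source and contains no gap that would affect the paper.
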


We are now in a position to prove Theorem \ref{t3.1}.

\begin{proof}[Proof of Theorem \ref{t3.1}]
For $i=1,\ldots, n$, by Proposition \ref{p2.6}, there exists $C_i>0$ such that
\begin{equation}\label{4.15}
|v|_{L^{p_i(x)}(\Om)}\leq C_i \|v\|_{W_0^{1,p_i(x)}(w_i,\Om)}\quad  \text{for all}\ v\in W_0^{1,p_i(x)}(w_i,\Om).
\end{equation}
For any fixed $\lm>0$, choose $\ep=\ep(\lm)>0$ small enough so that
\begin{equation}\label{4.16}
\frac{1}{p_i^+}-\ep\lm C_i>0\quad  \text{for}\ i=1,\ldots,n.
\end{equation}
From (H4), there exists $T=T(\ep)>0$ such that
\begin{equation*}
F(x,t_1,\ldots,t_n)\leq \ep\sum_{i=1}^n|t_i|^{p_i^-}\quad  \text{for all}\ x\in\Om\ \text{and}\
t=(t_1,\ldots,t_n)\in\R^n \ \text{with} \ |t|>T.
\end{equation*}
This, together with (H3), implies that
\begin{equation}\label{4.17}
F(x,t_1,\ldots,t_n)\leq C(\ep)+\ep\sum_{i=1}^n|t_i|^{p_i^-}\quad  \text{for all}\ x\in\Om\ \text{and}\
t=(t_1,\ldots,t_n)\in\R^n.
\end{equation}
For any $u=(u_1,\ldots, u_n)\in X$, from Proposition \ref{p2.2} (d), \eqref{4.15}, and \eqref{4.17}, it follows that
\begin{eqnarray*}
I(u)&\geq& \sum_{i=1}^n\frac{1}{p_i^{+}}\int_{\Om}w_i(x)|\na u_i|^{p_i(x)}dx-\lm\int_{\Om}\left(C(\ep)+\ep\sum_{i=1}^n|u_i|^{p_i^-}\right)dx\\
&\geq &\sum_{i=1}^n\left[\frac{1}{p_i^{+}}\left(\|u_i\|_{W_0^{1,p_i(x)}(w_i,\Om)}^{p_i^-}-1\right)
-\ep\lm |u_i|^{p_i^-}_{L^{p_i(x)}(\Om)}\right]
-\lm C(\ep)\meas(\Om)\\
&\geq &\sum_{i=1}^n\left[\left(\frac{1}{p_i^{+}}-\ep\lm C_i\right)\|u_i\|_{W_0^{1,p_i(x)}(w_i,\Om)}^{p_i^-}
-\frac{1}{p_i^+}\right]-\lm C(\ep)\meas(\Om).
\end{eqnarray*}
Then, in view of \eqref{4.16}, $I(u)\to\infty$ as $\|u\|\to\infty$, i.e., $I$ is coercice.
By Remark \ref{r4.1} (b), $I$ is weakly lower semicontinuous.
Now, Lemma \ref{l4.3} implies that $I$ has a global minimizer
$u^{1}=(u^1_1,\ldots, u_n^1)\in X$ and $I'(u^1)=0$.

Now, we show that there exists $u=(u_1,\ldots,u_n)\in X$ such that $I(u)<0$ for large $\lm$.
For any $\ep>0$, let $B_{\ep}=\{x\in\Om \ :\ \text{dist}(x,B)\leq\ep\}$, where $B$ is the ball given in (H5).
Let $\ep>0$ be sufficiently small so that $\ov{B_{\ep}}\subset \Om$.
There exists $v_{\ep}\in C_c^1(\Om)$ such that $0\leq v_{\ep}(x)\leq t_0$ on $\Om$ and
\begin{equation*}
v_{\ep}(x)=\left\{\begin{array}{ll}
t_0, &\  x\in B,\\\noalign{\medskip}
0, &\ x\in\Om\setminus B_{\ep},
\end{array}
\right.
\end{equation*}
where $t_0$ is given in (H5).
Let $u_{\ep}(x)=(v_{\ep}(x),\ldots,v_{\ep}(x))$. Then, $u_{\ep}\in X$ and
\begin{eqnarray*}
I(u_{\ep})&=&\Phi(u_{\ep})-\lm\int_{\Om}F(x,v_{\ep},\ldots, v_{\ep})dx\\
&=&\Phi(u_{\ep})-\lm\int_{B}F(x,t_0,\ldots, t_0)dx-\lm\int_{B_{\ep}\setminus B}F(x,v_{\ep},\ldots, v_{\ep})dx\\
&\leq&\Phi(u_{\ep})-\lm\int_{B}F(x,t_0,\ldots, t_0)dx+\lm |F|_{L^{\infty}\left(\Om\times [-t_0, t_0]^n\right)}\meas(B_{\ep}\setminus B).
\end{eqnarray*}
Choose $\ep>0$ small enough so that
\begin{equation*}
|F|_{L^{\infty}\left(\Om\times [0, t_0]^n\right)}\meas(B_{\ep_0}\setminus B)\leq \frac12 \int_{B}F(x,t_0,\ldots, t_0)dx.
\end{equation*}
Then,
\begin{eqnarray*}
I(u_{\ep_0})\leq\Phi(u_{\ep_0})-\frac{\lm}{2}\int_{B}F(x,t_0,\ldots, t_0)dx<0
\end{eqnarray*}
if $\lm>\lm_0$, where
$$
\lm_0=\frac{2\Phi(u_{\ep_0})}{\int_{B}F(x,t_0,\ldots, t_0)dx}.
$$
Thus, $I(u^1)<0$ for any $\lm>\lm_0$, and so $u^1$ is now nontrivial. Now, by Remark \ref{r4.2}, 
we see that $u^1$ is a nontrivial weak solution of system \eqref{1.1} for all $\lm \in (\lm_0,\infty)$.

Now, we further assume that (H6)--(H8) hold.
We show that system \eqref{1.1} has a second nontrivial weak solution for any $\lm \in(\lm_0,\infty)$.
Recall that $I$ is coercive. Then, $I$ satisfies the PS condition since $I'$ is of type $(S_+)$ by Remark \ref{r4.1} (a).
Below, we show that the conditions $(A1)$ and $(A2)$ of Lemma \ref{l4.4} are satisfied.
For convenience, let $\ov{p}=\max\{p_1^+,\ldots, p_n^+\}$.
From (H7), there exists a
constant $q$ such that 
\begin{equation}\label{4.18}
\ov{p}<q<\min\{p_{1,s_1}^*(x),\ldots, p_{n,s_n}^*(x)\}\quad  \text{on}\ \ov{\Om}.
\end{equation}
Then, for $i=1,\ldots,n$, by Proposition \ref{p2.6}, there exist
$D_3>1$ and  $D_4>1$ such that
\begin{equation}\label{4.19}
|v|_{L^{p_i^+}(\Om)}\leq D_3\|v\|_{W^{1,p_i(x)}_0(w_i,\Om)}\quad \text{for all}\ v\in W^{1,p_i(x)}_0(w_i,\Om)
\end{equation}
and
\begin{equation}\label{4.20}
|v|_{L^{q}(\Om)}\leq D_4\|v\|_{W^{1,p_i(x)}_0(w_i,\Om)}\quad \text{for all}\ v\in W^{1,p_i(x)}_0(w_i,\Om).
\end{equation}
For $\de=\left(2\lm\ov{p}D_3^{\ov{p}}\right)^{-1}$, from (H4), and (H6), there exist $k_1>1$
and $k_2>0$ such that
\begin{equation*}
F(x,t_1,\ldots,t_n)\leq \de\sum_{i=1}^n|t_i|^{p_i^-}\leq\de\sum_{i=1}^n|t_i|^{q}\
\text{for all}\ x\in\Om\ \text{and}\ (t_1,\ldots,t_n)\in\R^n\ \text{with}\ |t_i|>k_1
\end{equation*}
and
\begin{equation*}
F(x,t_1,\ldots,t_n)\leq \de\sum_{i=1}^n|t_i|^{p_i^+}\quad
\text{for all}\ x\in\Om\ \text{and}\ (t_1,\ldots,t_n)\in\R^n\ \text{with}\ |t_i|<k_2.
\end{equation*}
Thus, in view of (H3), we see that there exists $D_5>0$ such that
\begin{equation}\label{4.21}
F(x,t_1,\ldots,t_n)\leq \de\sum_{i=1}^n|t_i|^{p_i^+}+D_5\sum_{i=1}^n|t_i|^{q}\quad
\text{for all}\ x\in\Om\ \text{and}\ (t_1,\ldots,t_n)\in\R^n.
\end{equation}
For $u=(u_1,\ldots,u_n)\in X$ with $\|u\|\leq 1$, from Proposition \ref{p2.2} (c) with $w(x)\equiv 1$ and
\eqref{4.19}--\eqref{4.21}, we have
\begin{eqnarray*}
I(u)&\geq&\frac{1}{\ov{p}}\sum_{i=1}^nw_i(x)|\na u_i|^{p_i(x)}dx
-\lm\de\sum_{i=1}^n\int_{\Om}|u_i|^{p_i^+}dx-\lm D_5\sum_{i=1}^n\int_{\Om}|u_i|^{q}dx\\
&\geq&\frac{1}{\ov{p}}\sum_{i=1}^n\|u_i\|_{W^{1,p_i(x)}_0(w_i,\Om)}^{p_i^+}-\lm\de\sum_{i=1}^nD_3^{p_i^+}\|u_i\|_{W^{1,p_i(x)}_0(w_i,\Om)}^{p_i^+}\\
&&-\lm D_5\sum_{i=1}^nD_4^{q}\|u_i\|_{W^{1,p_i(x)}_0(w_i,\Om)}^{q}\\
&\geq&\frac{1}{\ov{p}}\sum_{i=1}^n\|u_i\|_{W^{1,p_i(x)}_0(w_i,\Om)}^{{p}_i^+}-\lm\de D_3^{\ov{p}}\sum_{i=1}^n\|u_i\|_{W^{1,p_i(x)}_0(w_i,\Om)}^{{p}_i^+}\\
&&-\lm D_4^{{q}}D_5\sum_{i=1}^n\|u_i\|_{W^{1,p_i(x)}_0(w_i,\Om)}^{{q}}\\
&=&\frac{1}{2\ov{p}}\sum_{i=1}^n\|u_i\|_{W^{1,p_i(x)}_0(w_i,\Om)}^{{p}_i^+}-\lm D_4^{{q}}D_5\sum_{i=1}^n\|u_i\|_{W^{1,p_i(x)}_0(w_i,\Om)}^{{q}}\\
&\geq&\frac{1}{2\ov{p}}\sum_{i=1}^n\|u_i\|_{W^{1,p_i(x)}_0(w_i,\Om)}^{\ov{p}}-\lm D_4^{{q}}D_5\sum_{i=1}^n\|u_i\|_{W^{1,p_i(x)}_0(w_i,\Om)}^{{q}}.
\end{eqnarray*}
Recall the well-known inequalities
\begin{equation}\label{4.22}
(t_1+\ldots+t_n)^r\leq 2^{n(r-1)}(t_1^r+\ldots+t_n^r)\quad  \text{and}\quad
(t_1+\ldots+t_n)^r\geq t_1^r+\ldots+t_n^r
\end{equation}
for any $r\geq 1$ and $t_i\geq 0$, $i=1,\ldots, n$. Thus, 
\begin{eqnarray*}
I(u)&\geq&\frac{1}{2^{n(r-1)+1}\ov{p}}\left(\sum_{i=1}^n\|u_i\|_{W^{1,p_i(x)}_0(w_i,\Om)}\right)^{\ov{p}}
-\lm D_4^{{q}}D_5\left(\sum_{i=1}^n\|u_i\|_{W^{1,p_i(x)}_0(w_i,\Om)}\right)^{{q}}\\
&=&\frac{1}{2^{n(r-1)+1}\ov{p}}\|u\|^{\ov{p}}-\lm D_4^{{q}}D_5\|u\|^{{q}}.
\end{eqnarray*}
Note that $\ov{p}<q$. Then, if we let
$$
0<\rho<\min\left\{1, \   \|u^1\|, \  \left(\frac{1}{2^{n(r-1)+1}\lm\ov{p}D_4^{q}D_5}\right)^{\frac{1}{q-\ov{p}}}\right\},
$$
it is clear that $I(u)>0$ for all $u\in \ov{B}_{\rho}(u_0)\setminus\{u_0\}$ and
$$
I(u)\geq \kappa:=\frac{1}{2^{n(r-1)+1}\ov{p}}\rho^{\ov{p}}-\lm D_4^{{q}}D_5\rho^q>0\quad \text{for all}\ u\in\partial B_{\rho}(u_0),
$$
where $u_0=(0,\ldots,0)$. In view of (H8), we have $I(u_0)=0$. Note also that $I(u^1)<0$. Then, 
the conditions (A1) and (A2) of Lemma \ref{l4.4}, with $u_1=u^1$, are satisfied. Hence, by Lemma \ref{l4.5},
$I$ has a critical point $u^2\in X$ with $I(u^2)\geq \kappa>0$.
Clearly, $u^2(x)\not\equiv u^1(x)$ and $u^2$ is nontrivial in $\Om$. Thus,
from Remark \ref{r4.2}, $u^2$ is a second nontrivial solution of system \eqref{1.1}.
This completes the proof of the theorem.
\end{proof}

\begin{proof}[Proof of Corollary \ref{c3.1}]
From \eqref{3.1}, (H9), and (H10), it is easy to see that (H4) and (H6) of Theorem \ref{t3.1} hold. Then, all the 
conditions of Theorem \ref{t3.1} are satisfied. Hence, the conclusion follows from Theorem \ref{t3.1}.
\end{proof}

\begin{proof}[Proof of Corollary \ref{c3.2}]
With $n=1$, $f(x,t)=a(x)t^{\ga(x)-1}-b(x)t^{\beta(x)-1}$, and
$F(x,t)=\frac{a(x)}{\gamma(x)t^{\ga(x)}}-\frac{b(x)}{\be(x)t^{\be(x)}}$, it is easy to verify that
all the conditions of Theorem \ref{t3.1} are satisfied. The conclusion then follows from Theorem \ref{t3.1}.
\end{proof} 

For $u\in W_0^{1, p(x)}(w, \Om)$, denote $u_{-}=-\min\{u_i, 0\}$.
Lemma \ref{l4.5} below follows from \cite[Lemma 4.1]{HS2} and \cite[Theorem 7.6]{gt}.

\begin{lemma}\label{l4.5}
Let $w$ be given as in Corollary \ref{c3.3}. Then, $u_-\in W_0^{1, p(x)}(w, \Om)$ and 
$$
\na u_-=\left\{\begin{array}{ll}
\na u\ & \text{if}\ u<0,\\\noalign{\medskip}
0\ & \text{if}\  u\geq 0.
\end{array}
\right.
$$
\end{lemma}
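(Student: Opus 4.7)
The plan is to combine the weighted variable-exponent truncation identity from \cite[Lemma 4.1]{HS2} with the classical truncation argument \cite[Theorem 7.6]{gt}, and then recover the zero boundary trace by approximation. Throughout, the hypothesis $w\in L^{\infty}(\Omega)$ (from Corollary \ref{c3.3}) lets the weighted modular be controlled by the unweighted one, which is what allows the usual Sobolev machinery to be imported verbatim.

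First I would establish the weak-gradient identity on all of $W^{1,p(x)}(w,\Omega)$. Approximate $s\mapsto\min\{s,0\}$ by the $C^{1}$ functions
\[
\eta_{\epsilon}(s)=\begin{cases}-\sqrt{s^{2}+\epsilon^{2}}+\epsilon,& s\le 0,\\ 0,& s>0,\end{cases}
\]
which satisfy $|\eta_{\epsilon}'|\le 1$ and, as $\epsilon\to 0^{+}$, converge pointwise with $\eta_{\epsilon}(s)\to\min\{s,0\}$ and $\eta_{\epsilon}'(s)\to\chi_{\{s<0\}}$. The classical chain rule gives $\eta_{\epsilon}(u)\in W^{1,p(x)}(w,\Omega)$ with weak gradient $\eta_{\epsilon}'(u)\na u$. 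Passing to the limit in the distributional definition of the gradient, with the Lebesgue dominated convergence theorem applied to the majorant $w|\na u|^{p(x)}\in L^{1}(\Omega)$ (which is integrable because $u\in W^{1,p(x)}(w,\Omega)$), produces the claimed formula for $\na u_{-}$. The pointwise bounds $|u_{-}|\le|u|$ and $|\na u_{-}|\le|\na u|$, together with the monotonicity of the modular supplied by Proposition \ref{p2.2}, then give $u_{-}\in L^{p(x)}(\Omega)$ and $|\na u_{-}|\in L^{p(x)}(w,\Omega)$, hence $u_{-}\in W^{1,p(x)}(w,\Omega)$.

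Next I would promote this to $u_{-}\in W_{0}^{1,p(x)}(w,\Omega)$. Pick $\varphi_{k}\in C_{0}^{\infty}(\Omega)$ with $\varphi_{k}\to u$ in $W^{1,p(x)}(w,\Omega)$. Each $(\varphi_{k})_{-}$ is Lipschitz with compact support in $\Omega$; since $w\in L^{\infty}(\Omega)$, standard mollification of $(\varphi_{k})_{-}$ converges in the weighted norm (the weighted modular is dominated by $\|w\|_{L^{\infty}}$ times the unweighted one), so $(\varphi_{k})_{-}\in W_{0}^{1,p(x)}(w,\Omega)$. The pointwise inequality $|(\varphi_{k})_{-}-u_{-}|\le|\varphi_{k}-u|$ gives $L^{p(x)}(\Omega)$ convergence via Proposition \ref{p2.3}. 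For the gradients, I would extract a subsequence along which $\varphi_{k}\to u$ and $\na\varphi_{k}\to\na u$ a.e., and then apply the dominated convergence theorem with majorant $w(|\na\varphi_{k}|+|\na u|)^{p(x)}$, whose $L^{1}$ norm is uniformly controlled by Proposition \ref{p2.2} because $\na\varphi_{k}\to\na u$ in $L^{p(x)}(w,\Omega)$. This yields $(\varphi_{k})_{-}\to u_{-}$ in $W^{1,p(x)}(w,\Omega)$, and closedness of $W_{0}^{1,p(x)}(w,\Omega)$ finishes the argument.

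The main obstacle is the behaviour of $\chi_{\{\varphi_{k}<0\}}$ relative to $\chi_{\{u<0\}}$ on the level set $\{u=0\}$: these indicator functions need not converge pointwise there, so naive substitution into the gradient formula fails. The classical remedy, which I would invoke, is that any Sobolev function satisfies $\na u=0$ a.e.\ on $\{u=0\}$ (itself a consequence of the chain rule just established, applied to positive and negative parts). This kills the contribution from the ambiguous set and lets dominated convergence close the loop, giving convergence of $\na(\varphi_{k})_{-}$ to $\chi_{\{u<0\}}\na u$ in $L^{p(x)}(w,\Omega)$.
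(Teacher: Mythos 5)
Your argument is correct and is essentially the route the paper takes: the paper's ``proof'' is just the citation of \cite[Theorem 7.6]{gt} (the smooth approximation $\sqrt{s^2+\epsilon^2}$ of the truncation, i.e.\ exactly your $\eta_\epsilon$) together with \cite[Lemma 4.1]{HS2} for the weighted $W_0^{1,p(x)}(w,\Om)$ statement, and you have reconstructed the content of those references, including the one genuine subtlety (the level set $\{u=0\}$, disposed of via $\na u=0$ a.e.\ there). The only loose phrase is the dominated-convergence step for the gradients, where a $k$-dependent ``majorant'' $w(|\na\varphi_k|+|\na u|)^{p(x)}$ with uniformly bounded $L^1$ norm does not by itself license the Lebesgue theorem; this is repaired in one line by the device the paper itself uses at \eqref{4.6} and \eqref{4.12}, namely passing to a further subsequence along which $w|\na\varphi_k-\na u|^{p(x)}$ admits a genuine $k$-independent $L^1$ majorant (or by invoking the generalized dominated convergence theorem, since these functions converge to $0$ in $L^1(\Om)$ by Proposition \ref{p2.3}).
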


\begin{proof}[Proof of Corollary \ref{c3.3}]
With $n=1$, $f(x,t)=g(x)h(t)$, and
$F(x,t)=g(x)\int_0^th(s)ds$, it is easy to verify that
all the conditions of Theorem \ref{t3.1} are satisfied. Then, by Theorem \ref{t3.1}, there exists $\lm_0>0$ such that problem \eqref{3.3+} has two distinct
nontrivial weak solutions $u^1$ and $u^2$ for all $\lm\in (\lm_0, \infty)$. In the following, we show they are nonnegative.

In fact, for $i=1,2$, in view of Lemmas \ref{l4.1} (c), \ref{l4.5}, and the condition that $th(t)\leq 0$ on $[0,\infty)$, we have
\begin{eqnarray*}
0&=&\int_{\Om}w(x)|\na u^i|^{p(x)-2}\na u^i\cdot\na u^i_{-}dx-\lm \int_{\Om}g(x)h(u^i)u^i_{-}dx\\
&=&\int_{\Om}w(x)|\na u_-^i|^{p(x)-2}\na u_-^i\cdot\na u^i_{-}dx-\lm \int_{\Om}g(x)h(u_-^i)u^i_{-}dx\\
&\geq&\int_{\Om}w_i(x)|\na u_{i-}|^{p_i(x)}dx.
\end{eqnarray*}
Thus, $u_i\geq 0$ in $\Om$.
This completes the proof of the corollary.
\end{proof}

\end{document}